\title{An adaptive finite element method for distributed elliptic optimal
control problems with \\ variable energy regularization}
\author{Ulrich~Langer\footnote{Institute of Computational Mathematics,
    Johannes Kepler University Linz, and RICAM, \"OAW, Altenberger Stra{\ss}e 69, 4040 Linz,
    Austria, Email: ulanger@numa.uni-linz.ac.at},
  \;
  Richard~L\"oscher\footnote{Institut f\"{u}r Angewandte Mathematik,
    Technische Universit\"{a}t Graz, Steyrergasse 30, 8010 Graz, Austria,
    Email: loescher@math.tugraz.at}, 
  \; Olaf~Steinbach\footnote{Institut f\"{u}r Angewandte Mathematik,
    Technische Universit\"{a}t Graz, Steyrergasse 30, 8010 Graz, Austria,
    Email: o.steinbach@tugraz.at}, 
  \; Huidong~Yang\footnote{Faculty of Mathematics, University of Vienna,
  Oskar--Morgenstern--Platz 1, 1090 Wien, Austria, and Christian
  Doppler Laboratory for Mathematical Modeling and Simulation of Next
  Generations of Ultrasound Devices (MaMSi),
  Oskar--Morgenstern--Platz 1, 1090 Wien, Austria, Email: huidong.yang@univie.ac.at}
}  
\date{}
\newtheorem{theorem}{Theorem}
\newtheorem{lemma}{Lemma}
\numberwithin{equation}{section} 
\begin{document}

\maketitle

\begin{abstract}
We analyze the finite element discretization of distributed elliptic 
optimal control problems with variable energy regularization, where the
usual $L^2(\Omega)$ norm regularization term with a constant regularization
parameter $\varrho$ is replaced by a suitable representation of the
energy norm in $H^{-1}(\Omega)$
involving a variable, mesh-dependent regularization
parameter $\varrho(x)$. It turns out that the error between the
computed finite element state $\widetilde{u}_{\varrho h}$ and the desired
state $\overline{u}$ (target) is optimal in the $L^2(\Omega)$ norm 
provided that $\varrho(x)$ behaves like the local mesh size squared.
This is especially important when adaptive meshes are used in order
to approximate discontinuous target functions.
The adaptive scheme can be driven by the computable and localizable 
error norm $\| \widetilde{u}_{\varrho h} - \overline{u}\|_{L^2(\Omega)}$ 
between the finite element state $\widetilde{u}_{\varrho h}$ and the target
$\overline{u}$. The numerical results not only illustrate our theoretical
findings, but also show that the iterative solvers for the discretized reduced 
optimality system are very efficient and robust.
\end{abstract} 

\begin{keywords} 
Distributed elliptic optimal control problem, variable energy regularization,
finite element discretization, error estimates, adaptivity, solvers\\
\end{keywords}
\noindent
\begin{msc}
49J20,  
49M05,  
35J05,  
65M60,  
65M15,  
65N22   
\end{msc}

%
%
\section{Introduction}
\label{sec:Introduction}
%
Optimal control \cite{ABVS2012,MHRPMUSU2009,Lions:1971,Troeltzsch:2010}
and inverse problems
\cite{EnglHankeNeubauer:1996,Isakov:2017,SchusterKaltenbacher:2012}
subject to partial differential equations often involve some
parameter-dependent cost or regularization terms, see also the recent
special issue \cite{ClasonKaltenbacher:2020} on optimal control and
inverse problems. While in optimal control problems the regularization
parameter $\varrho$ is often considered as a given constant, in inverse
problems, the parameter-dependent convergence as $\varrho \to 0$
is well studied, see, e.g., \cite{AlbaniCezaroZubelli:2016}.
For tracking type cost functionals subject to elliptic partial differential
equations, the regularization error was analyzed in
\cite{LLSY2:NeumuellerSteinbach:2021a} which depends on the regularity
of the given target. In \cite{LLSY2:LangerSteinbachYang:2022b},
and in the case of energy regularization, we have
considered a related finite element analysis which resulted in an
optimal choice of the regularization parameter $\varrho = h^2$,
or vice versa, where
$h$ is the mesh size of the globally quasi-uniform finite element mesh.
While the latter can be used to approximate smooth target functions
in the state space, adaptively refined finite element meshes should be
used when considering less regular target functions, e.g., discontinuous
or singular, or violating Dirichlet boundary conditions which are
involved in the definition of the state space. This motivates the use
of a variable regularization parameter function which 
can be defined by using the local finite element mesh sizes $h_\ell$. 
We are not aware of any paper on optimal control problems dealing with such
an approach. However, there are several papers in imaging considering
a similar approach: In \cite{Burger:2017}, a variable regularization
parameter is used for an adaptive balancing of the data fidelity and
the regularization term, see equation (3) in \cite{Burger:2017}.
A variable $L^2(\Omega)$ (TV) regularization is considered in
\cite[equation (1.1)]{Chung:2017}. Finally, a spatially adapted total
generalized variation model was already
used in \cite[equation (1.4)]{Bredies:2013}, see also the more recent
work \cite{Hintermueller:2022}.

As model problem we consider the optimal control problem to minimize 
the cost functional
\begin{equation}\label{minimization problem}
  {\mathcal{J}}(u_\varrho,z_\varrho) =
  \frac{1}{2} \int_\Omega [u_\varrho(x)-\overline{u}(x)]^2 dx +
  \frac{1}{2} \, \varrho \, \| z_\varrho \|^2_{H^{-1}(\Omega)}
\end{equation}
subject to the Dirichlet boundary value problem for the Poisson equation,
\begin{equation}\label{primal problem}
  - \Delta u_\varrho = z_\varrho \quad \mbox{in} \; \Omega, \quad
  u_\varrho = 0 \quad \mbox{on} \; \partial\Omega .
\end{equation}
We assume that $\Omega \subset {\mathbb{R}}^n$, $n=1,2,3$, is a bounded
Lipschitz domain, and $\varrho \in {\mathbb{R}}_+$ is some, at this time
constant, regularization parameter, on which the solution
$(u_\varrho,z_\varrho)$ depends.
Our particular interest is in the behavior of
$\| u_\varrho - \overline{u}\|_{L^2(\Omega)}$ as $\varrho \to 0$, see
\cite{LLSY2:NeumuellerSteinbach:2021a}. Note that the energy norm as used in
\eqref{minimization problem} is given by
\[
  \| z_\varrho \|_{H^{-1}(\Omega)}^2 = \| \nabla u_\varrho \|^2_{L^2(\Omega)} =
  \langle z_\varrho , u_\varrho \rangle_\Omega, =
  \langle z_\varrho , {\mathcal{S}} z_\varrho \rangle_\Omega,
\]
where $u_\varrho = {\mathcal{S}} z_\varrho \in H^1_0(\Omega)$ is the weak
solution of the primal Dirichlet boundary value problem \eqref{primal problem},
and ${\mathcal{S}} : H^{-1}(\Omega) \to H^1_0(\Omega) \subset L^2(\Omega)$
is the associated solution operator. Hence we can write the reduced
cost functional as
\begin{equation}\label{reduced cost rho konstant}
  \widetilde{\mathcal{J}}(z_\varrho) =
  \frac{1}{2} \, \| {\mathcal{S}} z_\varrho - \overline{u} \|^2_{L^2(\Omega)}
  +
  \frac{1}{2} \, \varrho \, \langle {\mathcal{S}} z_\varrho , z_\varrho
  \rangle_\Omega,
\end{equation}
and its minimizer is given as the unique solution of the
gradient equation
\[
  {\mathcal{S}}^* ({\mathcal{S}}z_\varrho - \overline{u}) +
  \varrho \, {\mathcal{S}}z_\varrho = 0 \, .
\]
In addition to $u_\varrho = {\mathcal{S}}z_\varrho$ we now introduce the
adjoint state $p_\varrho = {\mathcal{S}}^*(u_\varrho-\overline{u})$
as unique solution of the Dirichlet boundary value problem
\begin{equation}\label{adjoint problem}
  - \Delta p_\varrho = u_\varrho - \overline{u} \quad
  \mbox{in} \; \Omega, \quad p_\varrho =0 \quad
  \mbox{on} \; \partial \Omega .
\end{equation}
Hence we can rewrite the gradient equation as
\[
p_\varrho + \varrho \, u_\varrho = 0 \quad \mbox{in} \; \Omega .
\]
When eliminating the adjoint state $p_\varrho$ we can determine the
optimal state $u_\varrho$ as the solution of the singularly perturbed
Dirichlet boundary value problem
\begin{equation}\label{Gradient equation rho konstant}
  - \varrho \, \Delta u_\varrho + u_\varrho = \overline{u} \quad
  \mbox{in} \; \Omega, \quad u_\varrho = 0 \quad \mbox{on} \;
  \partial \Omega,
\end{equation}
which is also known as differential filter in fluid mechanics \cite{LLSY2:John:2016a}.
The variational formulation of \eqref{Gradient equation rho konstant}
is to find $u_\varrho \in H^1_0(\Omega)$ such that
\begin{equation}\label{VF rho konstant}
  \varrho \, \langle \nabla u_\varrho , \nabla v \rangle_{L^2(\Omega)} +
  \langle u_\varrho , v \rangle_{L^2(\Omega)} =
  \langle \overline{u} , v \rangle_{L^2(\Omega)} \quad
  \mbox{for all} \; v \in H^1_0(\Omega) .
\end{equation}
For a finite element discretization of the variational formulation
\eqref{Gradient equation rho konstant} we may use the ansatz space
$V_h := S_h^1(\Omega) \cap H^1_0(\Omega)$
of piecewise linear and continuous basis functions which are defined with
respect to some admissible and globally quasi-uniform decomposition
of $\Omega$ into simplicial finite elements of mesh size $h$.
The Galerkin variational formulation of \eqref{VF rho konstant}
is to find $u_{\varrho h} \in V_h$ such that
\begin{equation}\label{FEM rho konstant}
  \varrho \, \langle \nabla u_{\varrho h} , \nabla v_h \rangle_{L^2(\Omega)} +
  \langle u_{\varrho h}, v_h \rangle_{L^2(\Omega)} =
  \langle \overline{u} , v_h \rangle_{L^2(\Omega)} \quad
  \mbox{for all} \; v_h \in V_h.
\end{equation}
When combining the regularization error estimates for
$\| u_\varrho - \overline{u} \|_{L^2(\Omega)}$ as given in
\cite{LLSY2:NeumuellerSteinbach:2021a} with finite element error estimates for the
approximate solution $u_{\varrho h}$, i.e., for
$\| u_{\varrho h} - u_\varrho \|_{L^2(\Omega)}$,
we were able to derive estimates
for the error $\| u_{\varrho h} - \overline{u} \|_{L^2(\Omega)}$, see
\cite{LLSY2:LangerSteinbachYang:2022b}. In particular for the optimal choice
$\varrho = h^2$ this gives
\begin{equation}\label{Error rho konstant}
  \| u_{\varrho h} - \overline{u} \|_{L^2(\Omega)} \leq c \, h^s \,
  \| \overline{u} \|_{H^s(\Omega)},
\end{equation}
when assuming $\overline{u} \in H^s_0(\Omega) :=
[L^2(\Omega),H^1_0(\Omega)]_s$ for $s \in [0,1]$, or
$ \overline{u} \in H^1_0(\Omega) \cap H^s(\Omega)$ for $ s \in (1,2]$.
This error estimate remains true when considering optimal
control problems with energy regularization subject to time-dependent
partial differential equations, see
\cite{UL:LangerSteinbachYang:2022a} in the case of
the heat equation. However, when considering less regular target functions
$\overline{u}$, e.g., singular or discontinuous targets, the use of
adaptive finite elements seems to be mandatory in order to gain optimal
computational complexity. In this case it is not obvious how to choose
a constant regularization parameter $\varrho$, e.g.,
$\varrho = h_{\min{}}^2$. Instead, one may use a locally adapted
regularization function $\varrho(x)$, $x \in \Omega$.
%
The energy norm in $H^{-1}(\Omega)$ can be realized by duality when
  solving a Poisson equation with zero Dirichlet boundary conditions.
  When including the (constant) regularization parameter $\varrho$,
  we can generalize this approach by considering a diffusion equation
  with $\varrho(x)^{-1}$ as diffusion coefficient in order to realize
  the variable energy regularization within an adaptive finite element
  discretization.

The rest of this paper is organized as follows. In
Section~\ref{sec:DistributedOCPs}, we derive the analogon of the 
optimal control problem \eqref{minimization problem} when using  
a regularization function $\varrho(x)$ instead of a constant 
regularization parmeter $\varrho$, and the corresponding 
reduced optimality systems. 
Section~\ref{sec:RegularizationErrorEstimates} provides estimates
of the derivation of the state $u_\varrho$ from the desired state
$\overline{u}$ with respect to the $L^2$-norm in terms of the
regularization function $\varrho(x)$, and the regularity of the
target $\overline{u}$.
In Section~\ref{sec:FiniteElementErrorEstimates}, we analyze the
$L^2$-norm $\|\widetilde{u}_{\varrho h} - \overline{u}\|_{L^2(\Omega)}$
of the error between the computed finite element state
$\widetilde{u}_{\varrho h}$ and the desired state $\overline{u}$ leading
to an elementwise adaption of the regularization function $\varrho(x)$
to the local mesh size $h_\ell$.
The first part of Section~\ref{sec:Numerical results} is devoted 
to numerical studies of the error
$\|\widetilde{u}_{\varrho h} - \overline{u}\|_{L^2(\Omega)}$
for benchmark problems with discontinuous targets $\overline{u}$, 
the second part 
devises 
a postprocessing algorithm for the 
computation of the control corresponding to the computed state,
whereas the third
part provides numerical studies of the proposed iterative 
solvers in the three-dimensional case $n=3$.
Finally, in Section~\ref{sec:ConclusionAndOutlook},
we draw some conclusions, and give some outlook on further research work.

%
%
\section{Distributed optimal control problems with \\
  variable energy regularization}
\label{sec:DistributedOCPs}
To give a motivation for the optimization problem to be solved, let
us consider an alternative representation of the energy norm, still
using a constant regularization parameter $\varrho$:
\[
  \varrho \, \| z_\varrho \|^2_{H^{-1}(\Omega)} =
  \| \sqrt{\varrho} \, z_\varrho \|^2_{H^{-1}(\Omega)} =
  \langle \sqrt{\varrho} \, z_\varrho , w_\varrho \rangle_\Omega,
\]
where $w_\varrho \in H^1_0(\Omega)$ is the weak solution of the Dirichlet
boundary value problem
\begin{equation}\label{norm splitted}
  - \Delta w_\varrho = \sqrt{\varrho} \, z_\varrho \quad
  \mbox{in} \; \Omega, \quad w_\varrho = 0
  \quad \mbox{on} \; \partial \Omega .
\end{equation}
Now we may introduce
$\widetilde{w}_\varrho = \sqrt{\varrho} \, w_\varrho$
to conclude the diffusion equation
\[
  - \mbox{div} \left[ \varrho^{-1} \, \nabla \widetilde{w}_\varrho \right] =
  z_\varrho \quad \mbox{in} \; \Omega, \quad
  \widetilde{w}_\varrho = 0 \quad \mbox{on} \; \partial \Omega ,
\]
and the norm representation
\[
  \varrho \, \| z_\varrho \|^2_{H^{-1}(\Omega)} =
  \langle \sqrt{\varrho} \, z_\varrho , w_\varrho \rangle_\Omega =
  \langle z_\varrho , \widetilde{w}_\varrho \rangle_\Omega .
\]
Instead of using a constant regularization parameter $\varrho$ we now
consider a diffusion equation with a variable diffusion coefficient 
$\varrho\in L^\infty(\Omega)$, that is uniformely bounded from above and below, 
i.e., there exists positive constants $\underline \varrho$ and $\overline \varrho$ such that 
$0<\underline \varrho \leq \varrho(x)\leq \overline \varrho <\infty$
for almost 
all
$x\in\Omega$.
More precisely,
we consider the Dirichlet boundary value problem
\begin{equation}\label{norm diffusion}
  - \mbox{div} \left[ \frac{1}{\varrho(x)} \,
    \nabla \widetilde{w}_\varrho(x) \right] =
  z_\varrho(x) \quad \mbox{for} \; x \in \Omega, \quad
  \widetilde{w}_\varrho(x) = 0 \quad \mbox{for} \; x \in \partial \Omega ,
\end{equation}
and its variational formulation to find
$\widetilde{w}_\varrho \in H^1_0(\Omega)$
such that
\[
  \langle A_{1/\varrho} \widetilde{w}_\varrho , v \rangle_\Omega :=
  \int_\Omega \frac{1}{\varrho(x)} \, \nabla \widetilde{w}_\varrho(x) \cdot
  \nabla v(x) \, dx = \int_\Omega z_\varrho(x) \, v(x) \, dx
\]
is satisfied for all $v \in H^1_0(\Omega)$, i.e., we have
$\widetilde{w}_\varrho = A^{-1}_{1/\varrho} z_\varrho$. Instead of
\eqref{reduced cost rho konstant} we now consider the reduced
cost functional
\begin{equation}\label{cost functional diffusion}
  \widetilde{\mathcal{J}}(z_\varrho) =
  \frac{1}{2} \, \| {\mathcal{S}} z_\varrho - \overline{u} \|^2_{L^2(\Omega)}
  + \frac{1}{2} \, \langle A^{-1}_{1/\varrho} z_\varrho , z_\varrho
  \rangle_\Omega,
\end{equation}
whose minimizer is given as the unique solution of the gradient
equation
\[
  {\mathcal{S}}^* ({\mathcal{S}}z_\varrho - \overline{u}) +
  A^{-1}_{1/\varrho} z_\varrho = 0,
\]
i.e.,
\begin{equation}\label{gradient equation diffusion}
  p_\varrho + \widetilde{w}_\varrho = 0 \quad \mbox{in} \; \Omega .
\end{equation}
The optimality system to be solved is now given by the primal
problem \eqref{primal problem}, the adjoint problem \eqref{adjoint problem},
the gradient equation \eqref{gradient equation diffusion},
and \eqref{norm diffusion}.
When eliminating $\widetilde{w}_\varrho$ and $z_\varrho$,
we end up with a coupled system of the primal problem
\begin{equation}
  - \mbox{div} \left[ \frac{1}{\varrho(x)} \, \nabla p_\varrho(x) \right]
  - \Delta u_\varrho(x) = 0 \quad \mbox{for} \; x \in \Omega, \quad
  u_\varrho(x) = 0 \quad \mbox{for} \; x \in \partial \Omega ,
\end{equation}
and the adjoint boundary value problem \eqref{adjoint problem}.
The related variational
formulation is to find $(u_\varrho,p_\varrho) \in H^1_0(\Omega) \times
H^1_0(\Omega)$ such that
\begin{equation}\label{VF primal}
  \int_\Omega \frac{1}{\varrho(x)} \, \nabla p_\varrho(x) \cdot
  \nabla v(x) \, dx + \int_\Omega \nabla u_\varrho(x) \cdot \nabla v(x)
  \, dx = 0
\end{equation}
for all $v \in H^1_0(\Omega)$, and
\begin{equation}\label{VF adjoint}
  \int_\Omega u_\varrho(x) \, q(x) \, dx
  - \int_\Omega \nabla p_\varrho(x) \cdot \nabla q(x) \, dx
  =
  \int_\Omega \overline{u}(x) \, q(x) \, dx
\end{equation}
for all $q \in H^1_0(\Omega)$. When introducing
\[
  \langle B u , v  \rangle_\Omega :=
  \int_\Omega \nabla u(x) \cdot \nabla v(x) \, dx \quad
  \mbox{for all} \; u,v \in H^1_0(\Omega),
\]
we can write the coupled variational formulation \eqref{VF primal}
and \eqref{VF adjoint} in operator form as
\[
  A_{1/\varrho} p_\varrho + B u_\varrho = 0, \quad
  u_\varrho - B^* p_\varrho = \overline{u},
\]
and eliminating $p_\varrho$ results in the Schur complement system
to find $u_\varrho \in H^1_0(\Omega)$ such that
\begin{equation}\label{Operator diffusion}
B^* A^{-1}_{1/\varrho} B u_\varrho + u_\varrho = \overline{u} .
\end{equation}
Note that
\begin{equation}\label{Def S diffusion}
S_\varrho := B^* A^{-1}_{1/\varrho} B : H^1_0(\Omega) \to H^{-1}(\Omega)
\end{equation}
is bounded, self-adjoint, and $H^1_0(\Omega)$ elliptic. Note that
for a constant regularization parameter $\varrho(x)=\varrho$,
\eqref{Operator diffusion} coincides with
\eqref{Gradient equation rho konstant}.

%
%
\section{Regularization error estimates}
\label{sec:RegularizationErrorEstimates}

In this section, we consider estimates for the regularization error
$\| u_\varrho - \overline{u} \|_{L^2(\Omega)}$ when
$u_\varrho \in H^1_0(\Omega)$ is the weak solution of the operator
equation
\begin{equation}\label{Abstract operator equation}
S_\varrho u_\varrho + u_\varrho = \overline{u} \, ,
\end{equation}
and where $S_\varrho$ is as defined in \eqref{Def S diffusion}. Note that
$S_\varrho$ induces a norm, satisfying
\[
  \| v \|_{S_\varrho} := \sqrt{\langle S_\varrho v , v \rangle_\Omega},
  \quad
  \langle S_\varrho u , v \rangle_\Omega \leq
  \| u \|_{S_\varrho} \| v \|_{S_\varrho} \quad
  \mbox{for all} \; u,v \in H^1_0(\Omega).
\]
First we follow the general approach as given in \cite[Section~2]{UL:LangerSteinbachYang:2022a}
in the case of a constant regularization parameter. The variational
formulation of the
operator equation \eqref{Abstract operator equation} is to find
$u_\varrho \in H^1_0(\Omega)$ such that
\begin{equation}\label{Abstract VF}
  \langle S_\varrho u_\varrho , v \rangle_\Omega +
  \langle u_\varrho , v \rangle_{L^2(\Omega)} =
  \langle \overline{u} , v \rangle_{L^2(\Omega)} 
\end{equation}
is satisfied for all $v \in H^1_0(\Omega)$. Unique solvability of the
variational formulation \eqref{Abstract VF} follows from the boundedness
and ellipticity of $S_\varrho$.

\begin{theorem}
  Let $u_\varrho \in H^1_0(\Omega)$ be the unique solution of the
  variational formulation \eqref{Abstract VF}.
  For $ \overline{u} \in L^2(\Omega)$, there holds the estimate
  \begin{equation}\label{abstract regularization L2 L2}
    \| u_\varrho - \overline{u} \|_{L^2(\Omega)} \leq
    \| \overline{u} \|_{L^2(\Omega)} .
  \end{equation}
  For $\overline{u} \in H^1_0(\Omega)$ we have
  \begin{equation}\label{abstract regularization H1 H1}
    \| u_\varrho - \overline{u} \|_{S_\varrho} \leq
    \|\overline{u} \|_{S_\varrho},
  \end{equation}
  and
  \begin{equation}\label{abstract regularization L2 H1}
    \| u_\varrho - \overline{u} \|_{L^2(\Omega)} \leq
    \| \overline{u} \|_{S_\varrho} \, .
  \end{equation}
  If $\overline{u} \in H^1_0(\Omega)$ is such that
  $S_\varrho \overline{u} \in L^2(\Omega)$ is satisfied, we also
  have
  \begin{equation}\label{abstract regularization L2 H2}
    \| u_\varrho - \overline{u} \|_{L^2(\Omega)} \leq
    \| S_\varrho \overline{u} \|_{L^2(\Omega)},
  \end{equation}
  and
  \begin{equation}\label{abstract regularization H1 H2}
    \| u_\varrho -\overline{u} \|_{S_\varrho} \leq
    \| S_\varrho \overline{u} \|_{L^2(\Omega)} .
  \end{equation}
\end{theorem}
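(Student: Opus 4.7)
The plan is to derive all five estimates from a small number of energy identities obtained by testing the variational formulation \eqref{Abstract VF} with carefully chosen admissible functions, together with the Cauchy--Schwarz inequality in the $\|\cdot\|_{S_\varrho}$ or $L^2$ inner products. The underlying observation is that the operator $S_\varrho$ is self-adjoint and positive on $H^1_0(\Omega)$, so it induces the inner product and norm $\|\cdot\|_{S_\varrho}$, for which a Cauchy--Schwarz inequality is available with no additional constant.

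For \eqref{abstract regularization L2 L2}, assuming only $\overline{u}\in L^2(\Omega)$, the natural move is to test \eqref{Abstract VF} with $v=u_\varrho\in H^1_0(\Omega)$ to obtain
\[
  \|u_\varrho\|_{S_\varrho}^2 + \|u_\varrho\|_{L^2(\Omega)}^2
  = \langle \overline{u},u_\varrho\rangle_{L^2(\Omega)} .
\]
Expanding $\|u_\varrho-\overline{u}\|_{L^2(\Omega)}^2$ and substituting the above identity produces
\[
  \|u_\varrho-\overline{u}\|_{L^2(\Omega)}^2
  = \|\overline{u}\|_{L^2(\Omega)}^2 - \|u_\varrho\|_{L^2(\Omega)}^2 - 2\|u_\varrho\|_{S_\varrho}^2,
\]
from which \eqref{abstract regularization L2 L2} follows immediately by dropping the two nonnegative subtracted terms.

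For the remaining estimates I would exploit the extra regularity of the target. When $\overline{u}\in H^1_0(\Omega)$ it is admissible as a test function, so I rewrite \eqref{Abstract VF} as the error equation
\[
  \langle S_\varrho(u_\varrho-\overline{u}),v\rangle_\Omega
  + \langle u_\varrho-\overline{u},v\rangle_{L^2(\Omega)}
  = -\langle S_\varrho\overline{u},v\rangle_\Omega
  \qquad \forall v\in H^1_0(\Omega),
\]
and test with $v=u_\varrho-\overline{u}\in H^1_0(\Omega)$. Bounding the right-hand side by the Cauchy--Schwarz inequality with respect to $\|\cdot\|_{S_\varrho}$ gives
\[
  \|u_\varrho-\overline{u}\|_{S_\varrho}^2 + \|u_\varrho-\overline{u}\|_{L^2(\Omega)}^2
  \le \|\overline{u}\|_{S_\varrho}\,\|u_\varrho-\overline{u}\|_{S_\varrho} .
\]
Dropping the $L^2$ term and dividing yields \eqref{abstract regularization H1 H1}; substituting that bound back into the same identity yields \eqref{abstract regularization L2 H1}. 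Under the stronger assumption $S_\varrho\overline{u}\in L^2(\Omega)$, the duality pairing on the right-hand side of the error equation can instead be identified with an $L^2$ inner product, and bounding via Cauchy--Schwarz in $L^2$ gives
\[
  \|u_\varrho-\overline{u}\|_{S_\varrho}^2 + \|u_\varrho-\overline{u}\|_{L^2(\Omega)}^2
  \le \|S_\varrho\overline{u}\|_{L^2(\Omega)}\,\|u_\varrho-\overline{u}\|_{L^2(\Omega)} ;
\]
dropping the $S_\varrho$ term and cancelling one factor establishes \eqref{abstract regularization L2 H2}, and substituting that back in produces \eqref{abstract regularization H1 H2}.

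There is no real obstacle: each bound follows from a single testing plus a single application of Cauchy--Schwarz. The only point that needs attention is the interpretation of $\langle S_\varrho\overline{u},v\rangle_\Omega$ as an $L^2$ inner product when $S_\varrho\overline{u}\in L^2(\Omega)$, which is justified by the $H^{-1}$--$H^1_0$ duality pairing restricted to the dense embedding $L^2(\Omega)\hookrightarrow H^{-1}(\Omega)$. The structural reason everything works is that the bilinear form $\langle S_\varrho\cdot,\cdot\rangle_\Omega + \langle\cdot,\cdot\rangle_{L^2(\Omega)}$ defines a Hilbert norm on $H^1_0(\Omega)$, and the five estimates are exactly the five ways of pairing this norm with a Cauchy--Schwarz bound on $-\langle S_\varrho\overline{u},v\rangle$ using either the $\|\cdot\|_{S_\varrho}$ or the $\|\cdot\|_{L^2(\Omega)}$ factor.
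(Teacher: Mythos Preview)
Your proposal is correct and follows essentially the same approach as the paper: both derive the key identity
\[
  \|u_\varrho-\overline{u}\|_{S_\varrho}^2 + \|u_\varrho-\overline{u}\|_{L^2(\Omega)}^2
  = \langle S_\varrho \overline{u}, \overline{u}-u_\varrho\rangle_\Omega
\]
by testing with $v=u_\varrho-\overline{u}$ and then apply Cauchy--Schwarz in either the $S_\varrho$ or the $L^2$ inner product to obtain \eqref{abstract regularization H1 H1}--\eqref{abstract regularization H1 H2}. For \eqref{abstract regularization L2 L2} your algebraic expansion yielding $\|u_\varrho-\overline{u}\|_{L^2(\Omega)}^2 = \|\overline{u}\|_{L^2(\Omega)}^2 - \|u_\varrho\|_{L^2(\Omega)}^2 - 2\|u_\varrho\|_{S_\varrho}^2$ is a minor variant of the paper's rearrangement (and in fact slightly sharper, since it avoids a Cauchy--Schwarz step), but the underlying idea---test with $v=u_\varrho$---is identical.
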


\begin{proof}
  When considering the variational formulation \eqref{Abstract VF} for
  $v=u_\varrho \in H^1_0(\Omega)$, this gives
  \[
    \langle S_\varrho u_\varrho , u_\varrho \rangle_\Omega +
    \langle u_\varrho , u_\varrho \rangle_{L^2(\Omega)} =
    \langle \overline{u} , u_\varrho \rangle_{L^2(\Omega)},
  \]
  which can be written as
  \[
    \langle S_\varrho u_\varrho , u_\varrho \rangle_\Omega +
    \langle u_\varrho - \overline{u} ,
    u_\varrho - \overline{u} \rangle_{L^2(\Omega)}
    =
    \langle \overline{u} - u_\varrho , \overline{u} \rangle_{L^2(\Omega)} ,
  \]
  i.e.,
  \[
    \langle S_\varrho u_\varrho , u_\varrho \rangle_\Omega +
    \| u_\varrho - \overline{u} \|_{L^2(\Omega)}^2 =
    \langle \overline{u} - u_\varrho , u_\varrho \rangle_{L^2(\Omega)}
    \leq \| u_\varrho - \overline{u} \|_{L^2(\Omega)}
    \| \overline{u} \|_{L^2(\Omega)} .
  \]
  Hence we conclude \eqref{abstract regularization L2 L2}.
  For $\overline{u} \in H^1_0(\Omega)$ we can consider the variational
  formulation \eqref{Abstract VF} for
  $v=\overline{u} - u_\varrho \in H^1_0(\Omega)$ to obtain
  \begin{eqnarray*}
    \| \overline{u} - u_\varrho \|^2_{L^2(\Omega)}
    & = &
    \langle \overline{u} - u_\varrho , \overline{u} - u_\varrho
    \rangle_{L^2(\Omega)} =
          \langle S_\varrho u_\varrho , \overline{u} - u_\varrho \rangle_\Omega \\
    & = & - \langle S_\varrho \overline{u} - u_\varrho ,
          \overline{u} - u_\varrho \rangle_\Omega + \langle S_\varrho
          \overline{u} , \overline{u} - u_\varrho \rangle_\Omega ,
  \end{eqnarray*}
  i.e.,
  \[
    \| u_\varrho - \overline{u} \|^2_{L^2(\Omega)} +
    \| u_\varrho - \overline{u} \|^2_{S_\varrho} =
    \langle S_\varrho \overline{u} , \overline{u} - u_\varrho \rangle_\Omega
    \leq \| \overline{u} \|_{S_\varrho}
    \| u_\varrho - \overline{u} \|_{S_\varrho} .
  \]
  From this we conclude
  \[
    \| u_\varrho - \overline{u} \|_{S_\varrho} \leq
    \| \overline{u} \|_{S_\varrho},
  \]
  that is \eqref{abstract regularization H1 H1}, and
  \[
    \| u_\varrho - \overline{u} \|_{L^2(\Omega)} \leq
    \| \overline{u} \|_{S_\varrho},
  \]
  i.e., \eqref{abstract regularization L2 H1}. If
  $\overline{u} \in H^1_0(\Omega)$ is such that $S_\varrho \overline{u} \in
  L^2(\Omega)$ is satisfied, we also have
  \[
    \| u_\varrho - \overline{u} \|^2_{L^2(\Omega)} +
    \| u_\varrho - \overline{u} \|^2_{S_\varrho} =
    \langle S_\varrho \overline{u} , \overline{u} - u_\varrho \rangle_\Omega
    \leq \| S_\varrho \overline{u} \|_{L^2(\Omega)}
    \| u_\varrho - \overline{u} \|_{L^2(\Omega)} .
  \]
  Hence we obtain
  \[
    \| u_\varrho - \overline{u} \|_{L^2(\Omega)} \leq
    \| S_\varrho \overline{u} \|_{L^2(\Omega)},
  \]
  that is \eqref{abstract regularization L2 H2}, and
  \[
    \| u_\varrho - \overline{u} \|_{S_\varrho} \leq
    \| S_\varrho \overline{u} \|_{L^2(\Omega)},
  \]
  i.e., \eqref{abstract regularization H1 H2}.
\end{proof}

Let us now consider the operator $S_\varrho$ as defined in
\eqref{Def S diffusion}. For $ u \in H^1_0(\Omega)$, let
$p_u = A^{-1}_{1/\varrho} Bu \in H^1_0(\Omega)$ be the unique solution
of the variational formulation
\[
  \langle A_{1/\varrho} p_u , v \rangle_\Omega =
  \int_\Omega \frac{1}{\varrho(x)} \, \nabla p_u(x) \cdot \nabla v(x) \, dx =
  \int_\Omega \nabla u(x) \cdot \nabla v(x) \, dx =
  \langle B u , v \rangle_\Omega
\]
for all $v \in H^1_0(\Omega)$. We first conclude
\[
  \| u \|_{S_\varrho}^2
  \, = \,
  \langle S_\varrho u , u \rangle_\Omega
  \, = \,
  \langle B^* A^{-1}_{1/\varrho} B u , u \rangle_\Omega
  \, = \,
  \langle p_u , B u \rangle_\Omega
  \, = \,
  \langle A_{1/\varrho} p_u , p_u \rangle_\Omega .
\]
Moreover, using a weighted Cauchy--Schwarz inequality, this gives
\begin{eqnarray*}
  \langle A_{1/\varrho} p_u , p_u \rangle_\Omega
  & = & \int_\Omega \frac{1}{\varrho(x)} \,
        \nabla p_u(x) \cdot \nabla p_u(x) \, dx \, = \,
        \int_\Omega \nabla u(x) \cdot \nabla p_u(x) \, dx \\
  & & \hspace*{-2cm}
      \leq \left( \int_\Omega \varrho(x) \, \nabla u(x) \cdot \nabla u(x) \, dx
           \right)^{1/2} \left(
           \int_\Omega \frac{1}{\varrho(x)} \,
           \nabla p_u(x) \cdot \nabla p_u(x) \, dx \right)^{1/2},
\end{eqnarray*}
i.e.,
\begin{equation}\label{Norm H1 diffusion}
  \| u \|_{S_\varrho}^2 \leq \int_\Omega \varrho(x) \, |\nabla u(x)|^2 \, dx
  \quad \mbox{for all} \; u \in H^1_0(\Omega) .
\end{equation}
When combining this with the regularization error estimate
\eqref{abstract regularization L2 H1} this gives, for
$\overline{u} \in H^1_0(\Omega)$,
\begin{equation}\label{regularization diffusion L2 H1}
  \| u_\varrho - \overline{u} \|_{L^2(\Omega)}^2 \leq
  \int_\Omega \varrho(x) \, |\nabla \overline{u}(x)|^2 \, dx \, .
\end{equation}
It remains to consider $\| S_\varrho u \|_{L^2(\Omega)}$ when
$S_\varrho$ is given as in \eqref{Def S diffusion}, i.e.,
\[
  \| S_\varrho u \|_{L^2(\Omega)} = \| - \Delta p_u \|_{L^2(\Omega)}, \quad
  - \mbox{div} \left[ \frac{1}{\varrho(x)} \nabla p_u(x) \right] =
  - \Delta u(x) .
\]
We first compute
\begin{eqnarray*}
  \varrho(x) \Delta u(x)
  & = & \varrho(x) \, \mbox{div} \left[
        \frac{1}{\varrho(x)} \nabla p_u(x) \right] \, = \,
        \varrho(x) \sum\limits_{k=1}^n \frac{\partial}{\partial x_k}
        \left[
        \frac{1}{\varrho(x)} \frac{\partial}{\partial x_k} p_u(x) \right]  \\
  & = & \varrho(x) \, \nabla \left( \frac{1}{\varrho(x)} \right)
        \cdot \nabla p_u(x) + \Delta p_u(x) \, .
\end{eqnarray*}
For the first part, we further have
\[
  \varrho(x) \frac{\partial}{\partial x_k} \frac{1}{\varrho(x)} =
  \varrho(x) \, \left[ - \frac{1}{[\varrho(x)]^2}
    \frac{\partial}{\partial x_k} \varrho(x)\right] =
  - \frac{1}{\varrho(x)} \frac{\partial}{\partial x_k} \varrho(x),
\]
and hence,
\[
\varrho(x) \, \nabla \left( \frac{1}{\varrho(x)} \right)
\cdot \nabla p_u(x) = - \frac{1}{\varrho(x)} \, \nabla \varrho(x)
\cdot \nabla p_u(x) ,
\]
i.e.,
\[
  \Delta p_u(x) = \varrho(x) \Delta u(x) +
  \frac{1}{\varrho(x)} \, \nabla \varrho(x)
\cdot \nabla p_u(x) .
\]
When taking the square and applying H\"older's inequality we obtain
\begin{eqnarray*}
  [\Delta p_u(x)]^2
  & \leq & 2 \, [ \varrho(x) \Delta \overline{u}(x)]^2 +
           2 \, \frac{1}{[\varrho(x)]^2} \, \Big[ \nabla \varrho(x)
           \cdot \nabla p_u(x) \Big]^2 \\
  & \leq & 2 \, [\varrho(x) \Delta \overline{u}(x)]^2 +
           2 \, \frac{1}{[\varrho(x)]^2} \, |\nabla \varrho(x)|^2 \,
           |\nabla p_u(x)|^2 ,
\end{eqnarray*}
and therefore
\begin{eqnarray*}
  \| \Delta p_u \|^2_{L^2(\Omega)}
  & \leq & 2 \, \| \varrho \Delta \overline{u} \|^2_{L^2(\Omega)}
           + 2 \, \int_\Omega \frac{|\nabla \varrho(x)|^2}{[\varrho(x)]^2}
           \, |\nabla p_u(x)|^2 \, dx \\
    & \leq & 2 \, \| \varrho \Delta \overline{u} \|^2_{L^2(\Omega)}
             + 2 \, \sup\limits_{x \in \Omega}
             \frac{|\nabla \varrho(x)|^2}{[\varrho(x)]}
             \int_\Omega \frac{1}{[\varrho(x)]}
             \, |\nabla p_u(x)|^2 \, dx
\end{eqnarray*}
follows. Using \eqref{Norm H1 diffusion} we finally obtain
\[
  \| S_\varrho u \|^2_{L^2(\Omega)} \leq 2 \, \| \varrho \Delta u \|^2_{L^2(\Omega)}
  + 2 \, \sup\limits_{x \in \Omega}
             \frac{|\nabla \varrho(x)|^2}{[\varrho(x)]}
             \int_\Omega \varrho(x)] \, |\nabla u(x)|^2 \, dx \, .
\]
When assuming
\begin{equation}\label{Assumption varrho}
  \sup\limits_{x \in \Omega}
  \frac{|\nabla \varrho(x)|^2}{[\varrho(x)]} \leq c_\varrho,
\end{equation}
and combining this with \eqref{abstract regularization L2 H2} we obtain
\begin{equation}\label{regularization diffusion L2 H2}
  \| u_\varrho - \overline{u} \|_{L^2(\Omega)}^2 \leq
  2 \, \| \varrho \Delta \overline{u} \|^2_{L^2(\Omega)}
  + 2 \, c_\varrho \,
  \int_\Omega \varrho(x) \, |\nabla \overline{u}(x)|^2 \, dx \, .
\end{equation}
While for a constant regularization parameter $\varrho(x)=\varrho$
we obviously have $c_\varrho=0$, in the more general situation of a, e.g.,
piecewise linear parameter function $\varrho(x)$ we finally obtain
a similar error estimate as in \eqref{regularization diffusion L2 H1}
when assuming $\overline{u} \in H^1_0(\Omega)$ only. 
{Hence we
restrict our considerations to $\overline{u} \in H^1_0(\Omega)$,
and to less regular target functions
$\overline{u} \in [L^2(\Omega),H^1_0(\Omega)]_s$ for $s\in [0,1)$,
where we can formulate the following results.

\begin{theorem}
  Let $u_\varrho \in H^1_0(\Omega)$ be the unique solution of the
  Schur complement system \eqref{Operator diffusion}, where the regularization
  function $\varrho \in L^\infty(\Omega)$ is assumed to be bounded and
  uniform positive. 
  For $\overline{u} \in H^1_0(\Omega)$, there holds the regularization error estimate
  \begin{equation}\label{final regularization error estimate}
    \| u_\varrho - \overline{u} \|_{L^2(\Omega)}^2 \leq \int_\Omega
    \varrho(x) \, |\nabla \overline{u}(x)|^2 \, dx .
  \end{equation}
\end{theorem}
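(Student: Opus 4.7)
The plan is that this theorem is essentially a corollary synthesizing two ingredients that have already been prepared in the preceding discussion: the abstract regularization estimate \eqref{abstract regularization L2 H1} and the concrete bound \eqref{Norm H1 diffusion} on the $S_\varrho$-norm. The hypotheses on $\varrho$ (bounded above, uniformly positive below) ensure that the operator $S_\varrho = B^* A^{-1}_{1/\varrho} B$ defined in \eqref{Def S diffusion} is bounded, self-adjoint, and $H^1_0(\Omega)$-elliptic, so the Schur complement system \eqref{Operator diffusion} admits a unique solution $u_\varrho \in H^1_0(\Omega)$ and the abstract framework applies.

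First I would apply \eqref{abstract regularization L2 H1}, which gives
\[
  \| u_\varrho - \overline{u} \|_{L^2(\Omega)} \leq \| \overline{u} \|_{S_\varrho}
\]
whenever $\overline{u} \in H^1_0(\Omega)$; this is precisely the regime considered in the statement, so no further regularity is needed. Squaring this inequality is the only preparation required.

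Next I would invoke \eqref{Norm H1 diffusion} with the admissible choice $u = \overline{u} \in H^1_0(\Omega)$, which yields
\[
  \| \overline{u} \|_{S_\varrho}^2 \leq \int_\Omega \varrho(x) \, |\nabla \overline{u}(x)|^2 \, dx .
\]
Chaining the two bounds immediately produces \eqref{final regularization error estimate}.

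I do not expect any genuine obstacle, because the substantive work has already been carried out: \eqref{abstract regularization L2 H1} was established by testing \eqref{Abstract VF} against $v = \overline{u} - u_\varrho$, and \eqref{Norm H1 diffusion} followed from the identification $\| u \|_{S_\varrho}^2 = \langle A_{1/\varrho} p_u, p_u \rangle_\Omega$ together with a weighted Cauchy–Schwarz inequality against the weight $\varrho(x)$. The only point worth double-checking is that \eqref{Norm H1 diffusion} was derived under exactly the same boundedness and positivity assumptions on $\varrho$ stated here, so no hidden regularity of $\varrho$ (such as the gradient condition \eqref{Assumption varrho} used for \eqref{regularization diffusion L2 H2}) enters into the final estimate.
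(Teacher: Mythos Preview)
Your proposal is correct and matches the paper's own route exactly: the theorem is stated without a separate proof block precisely because it is the synthesis of \eqref{abstract regularization L2 H1} and \eqref{Norm H1 diffusion}, which the paper has already combined explicitly into \eqref{regularization diffusion L2 H1} in the preceding discussion. Your observation that only boundedness and uniform positivity of $\varrho$ are needed here (and not the gradient condition \eqref{Assumption varrho}) is also on point.
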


\noindent
Using space interpolation arguments we can combine the error estimates
\eqref{abstract regularization L2 L2} and
\eqref{final regularization error estimate} to derive related estimates
also for $\overline{u} \in [L^2(\Omega),H^1_0(\Omega)]_s$ for some
$s \in (0,1)$. Since the right hand side in
\eqref{final regularization error estimate} defines a weighted norm
in $H^1_0(\Omega)$, we can consider the eigenvalue problem
  \[
    - \mbox{div} \left[ \frac{1}{\varrho(x)} \, \nabla v(x) \right] =
      \lambda \, v(x) \quad \mbox{for} \; x \in \Omega, \quad
      v(x) = 0 \quad \mbox{for} \; x \in \partial \Omega , \quad
      \| v \|_{L^2(\Omega)}= 1 ,
  \]
  where the eigenfunctions $\{ v_i \}_{i \in {\mathbb{N}}}$ form an
  orthonormal basis in $L^2(\Omega)$, and the
  eigenvalues $\lambda_i=\lambda_i(\varrho) \in {\mathbb{R}}_+$
  are positive and tend to infinity as $i \to \infty$. Hence we can
  define, for $s \in [0,1]$, the weighted Sobolev norms
  \[
    \| u \|^2_{H^s_\varrho(\Omega)} :=
    \sum\limits_{i=1}^\infty \lambda_i^s \, u_i^2, \quad
    u_i = \langle u , v_i \rangle_{L^2(\Omega)} .
  \]
  Now we can formulate the final result of this section.

  \begin{theorem}
    For $\overline{u} \in [L^2(\Omega),H^1_0(\Omega)]_s$ 
    with
    some
    $s \in [0,1]$, 
    there holds the error estimate
    \begin{equation}\label{final regularization estimate s}
      \| u_\varrho - \overline{u} \|_{L^2(\Omega)} \leq
      \| \overline{u} \|_{H^s_\varrho(\Omega)}  .
    \end{equation}
  \end{theorem}
  
  \begin{proof}
    Recall that \eqref{final regularization estimate s} for $s=0$
    is \eqref{abstract regularization L2 L2}, while we have
    \eqref{final regularization error estimate} for $s=1$. Hence
    the assertion for $s \in (0,1)$ follows from interpolation.
    Under additional assumptions on $\varrho$, one can give a more
    explicit representation of the norm for $H_\varrho^s(\Omega)$ showing the 
    explicit dependency on the powers of $\varrho$, see
  {\rm \cite[Theorem 3.4.3]{Triebel:1978}}. 
  \end{proof}

%
%
\section{Finite element error estimates}
\label{sec:FiniteElementErrorEstimates}
Let $V_h = S_h^1(\Omega) \cap H^1_0(\Omega) =
\mbox{span} \{ \varphi_k \}_{k=1}^M$ be the finite element space of
piecewise linear and continuous basis functions $\varphi_k$, which are
defined with respect to some admissible locally quasi-uniform
decomposition of the computational domain $\Omega$ into $N$
simplicial shape-regular finite elements $\tau_\ell$ of local mesh size
$h_\ell = \Delta_\ell^{1/n}$, where $\Delta_\ell$ is the volume of the
finite element $\tau_\ell$, $\ell = 1,\ldots,N$.
As regularization we consider the piecewise constant function
\begin{equation}\label{pw constant regularization}
\varrho(x) = h_\ell^2 \quad \mbox{for} \; x \in \tau_\ell .
\end{equation}
The Galerkin variational formulation of the abstract operator equation
\eqref{Abstract operator equation} is to find $u_{\varrho h} \in V_h$ such that
\begin{equation}\label{FEM abstract}
\langle S_\varrho u_{\varrho h} , v_h \rangle_\Omega +
\langle u_{\varrho h} , v_h \rangle_{L^2(\Omega)} =
\langle \overline{u} , v_h \rangle_{L^2(\Omega)} \quad \mbox{for all} \;
v_h \in V_h .
\end{equation}
Using standard arguments we conclude Cea's lemma,
\begin{equation}\label{Cea Lemma}
\| u_\varrho - u_{\varrho h} \|_{S_\varrho}^2 +
\|u_\varrho - u_{\varrho h} \|^2_{L^2(\Omega)} \leq
\inf\limits_{v_h \in V_h } \Big[
\| u_\varrho - v_h \|_{S_\varrho}^2 +
\|u_\varrho - v_h \|^2_{L^2(\Omega)} \Big],
\end{equation}
from which we further obtain
\begin{eqnarray}\label{Error general}
  \| u_\varrho - u_{\varrho h} \|^2_{L^2(\Omega)}
  & \leq & 2 \, \Big[
           \| u_\varrho - \overline{u} \|^2_{S_\varrho} +
           \| u_\varrho - \overline{u} \|^2_{L^2(\Omega)} \\
  & & \hspace*{1.5cm} \nonumber
  + \inf\limits_{v_h \in V_h} \Big(
      \| \overline{u} - v_h \|_{S_\varrho}^2 +
      \| \overline{u} - v_h \|^2_{L^2(\Omega)} \Big) \Big] . 
\end{eqnarray}
When assuming $\overline{u} \in H^1_0(\Omega)$, and using the
regularization error estimates \eqref{abstract regularization H1 H1} and
\eqref{abstract regularization L2 H1}, this gives
\[
  \| u_\varrho - u_{\varrho h} \|^2_{L^2(\Omega)} \leq 2 \, \Big[
           2 \, \| \overline{u} \|^2_{S_\varrho} 
  + \inf\limits_{v_h \in V_h} \Big(
      \| \overline{u} - v_h \|_{S_\varrho}^2 +
      \| \overline{u} - v_h \|^2_{L^2(\Omega)} \Big) \Big] . 
\]
Let $\Pi_h \overline{u} \in V_h$ be a quasi-interpolation of
$\overline{u} \in H^1_0(\Omega)$, e.g., using the Scott--Zhang operator
$\Pi_h$, see, e.g., \cite{LLSY2:BrennerScott:2008}, and satisfying the error
estimates
\begin{equation}\label{error Scott Zhang L2}
  \| \overline{u} - \Pi_h \overline{u} \|_{L^2(\tau_\ell)}
  \leq c \, h_\ell \, \| \nabla \overline{u} \|_{L^2(\omega_\ell)},
\end{equation}
and
\begin{equation}\label{error Scott Zhang H1}
  \| \nabla( \overline{u} - \Pi_h \overline{u}) \|_{L^2(\tau_\ell)}
  \leq c \, \| \nabla \overline{u} \|_{L^2(\omega_\ell)}.
\end{equation}
Here, $\omega_\ell := \{ \tau_j: \overline{\tau}_\ell\cap \overline{\tau}_j
\neq \emptyset\}$ is the simplex patch of $\tau_\ell$.
Then we can estimate, using \eqref{Norm H1 diffusion} and
\eqref{error Scott Zhang H1},
\begin{eqnarray*}
  \| \overline{u} - \Pi_h \overline{u} \|_{S_\varrho}^2
  & \leq & \int_\Omega \varrho(x) \,
           |\nabla (\overline{u}-\Pi_h\overline{u}(x))|^2 \, dx \\
  & = & \sum_{\ell=1}^N h_\ell^2 \int_{\tau_\ell}
        |\nabla (\overline{u}(x)-\Pi_h\overline{u}(x))|^2 \, dx \\
  & = & \sum_{\ell=1}^N h_\ell^2 \,
        \| \nabla (\overline{u}-\Pi_h\overline{u}) \|_{L^2(\tau_\ell)}^2
        \, \leq \, c \, \sum_{\ell=1}^N h_\ell^2 \,
        \|\nabla \overline{u} \|_{L^2(\omega_\ell)}^2.
\end{eqnarray*}
Moreover, using \eqref{error Scott Zhang L2}, we also have
\[
  \| \overline{u} - \Pi_h \overline{u} \|_{L^2(\Omega)}^2 =
  \sum_{\ell=1}^N \| \overline{u} - \Pi_h \overline{u} \|_{L^2(\tau_\ell)}^2
  \leq c \, \sum_{\ell=1}^N h_\ell^2 \,
  \| \nabla \overline{u} \|_{L^2(\omega_\ell)}^2. 
\]
Together with \eqref{Norm H1 diffusion} we then obtain
\begin{equation}\label{error diffusion urho}
  \| u_\varrho - u_{\varrho h} \|^2_{L^2(\Omega)} \leq
  c \, \sum_{\ell=1}^N h_\ell^2 \, \| \nabla \overline{u} \|_{L^2(\omega_\ell)}^2,
\end{equation}
and with \eqref{regularization diffusion L2 H1} this finally gives
\begin{equation}\label{error diffusion target}
  \| u_{\varrho h} - \overline{u} \|_{L^2(\Omega)}^2 \leq
  c \, \sum_{\ell=1}^N h_\ell^2 \,
  \|\nabla \overline{u} \|_{L^2(\omega_\ell)}^2.
\end{equation}
The variational formulation \eqref{FEM abstract} requires, for any
given $u \in H^1_0(\Omega)$, the evaluation of
$S_\varrho u = B^* A_{1/\varrho}^{-1} B u = B^* p_u$, where
$ p_u = A_{1/\varrho}^{-1} Bu \in H^1_0(\Omega)$ is the unique solution
of the variational formulation
\begin{equation}\label{Def pu}
  \int_\Omega \frac{1}{\varrho(x)} \, \nabla p_u(x) \cdot
  \nabla v(x) \, dx = \int_\Omega \nabla u(x) \cdot \nabla v(x) \, dx
  \quad \mbox{for all} \; v \in H^1_0(\Omega).
\end{equation}
Hence we can define the approximate solution $p_{uh} \in V_h$
satisfying
\begin{equation}\label{Def puh}
  \int_\Omega \frac{1}{\varrho(x)} \, \nabla p_{uh}(x) \cdot
  \nabla v_h(x) \, dx = \int_\Omega \nabla u(x) \cdot \nabla v_h(x) \, dx
  \quad \mbox{for all} \; v_h \in V_h,
\end{equation}
and therefore we can introduce an approximation
$\widetilde{S}_\varrho u = B^* p_{uh}$ of $S_\varrho u = B^* p_u$.
Instead of \eqref{FEM abstract} we now consider the perturbed variational
formulation to find $\widetilde{u}_{\varrho h} \in V_h$ such that
\begin{equation}\label{FEM diffusion}
  \langle \widetilde{S}_\varrho \widetilde{u}_{\varrho h} , v_h \rangle_\Omega
  +
  \langle \widetilde{u}_{\varrho h} , v_h \rangle_{L^2(\Omega)}
  =
  \langle \overline{u} , v_h \rangle_{L^2(\Omega)} \quad
  \mbox{for all} \; v_h \in V_h .
\end{equation}
Unique solvability of \eqref{FEM diffusion} follows since the
stiffness matrix of $\widetilde{S}_\varrho$ is positive semi-definite,
while the mass matrix related to the inner product in $L^2(\Omega)$
is positive definite.

\begin{lemma}
  Let $\widetilde{u}_{\varrho h} \in V_h$ be the unique solution of the
  perturbed variational formulation \eqref{FEM diffusion}. Then there
  holds the error estimate
  \begin{equation}\label{error utilde}
    \| \widetilde{u}_{\varrho h} - u_{\varrho h} \|_{L^2(\Omega)} \leq
    c \, \sum\limits_{\ell=1}^N h_\ell^2 \,
    \| \nabla \overline{u} \|^2_{L^2(\omega_\ell)} .
  \end{equation}
\end{lemma}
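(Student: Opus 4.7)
Plan. I would run a Strang-type argument. Subtracting \eqref{FEM abstract} from \eqref{FEM diffusion} and testing the difference with $e := \widetilde{u}_{\varrho h} - u_{\varrho h} \in V_h$ yields the identity
\[
\|e\|_{\widetilde{S}_\varrho}^2 + \|e\|_{L^2(\Omega)}^2 \;=\; \langle(S_\varrho - \widetilde{S}_\varrho)u_{\varrho h},e\rangle_\Omega \;=\; \int_\Omega \nabla(p_{u_{\varrho h}} - p_{u_{\varrho h},h})\cdot \nabla e\,dx,
\]
where $p_{u_{\varrho h}}$ and $p_{u_{\varrho h},h}$ are as in \eqref{Def pu} and \eqref{Def puh} with $u = u_{\varrho h}$. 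Since $\|e\|_{\widetilde{S}_\varrho}^2 \geq 0$, the left-hand side controls $\|e\|_{L^2(\Omega)}^2$, so it suffices to estimate the right-hand side.

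For this I would apply the weighted Cauchy--Schwarz inequality with weights $\varrho^{1/2}$ and $\varrho^{-1/2}$:
\[
\int_\Omega \nabla(p_{u_{\varrho h}} - p_{u_{\varrho h},h})\cdot \nabla e\,dx \;\leq\; \Big(\int_\Omega \varrho\,|\nabla e|^2\,dx\Big)^{1/2}\,\|p_{u_{\varrho h}} - p_{u_{\varrho h},h}\|_{A_{1/\varrho}}.
\]
Since $e \in V_h$ is piecewise linear and $\varrho(x) = h_\ell^2$ on each $\tau_\ell$, the elementwise inverse inequality $\|\nabla e\|_{L^2(\tau_\ell)} \leq c\,h_\ell^{-1}\|e\|_{L^2(\tau_\ell)}$ gives $\int_\Omega \varrho\,|\nabla e|^2\,dx \leq c\,\|e\|_{L^2(\Omega)}^2$. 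Cancelling one factor of $\|e\|_{L^2(\Omega)}$ then leaves
\[
\|e\|_{L^2(\Omega)} \;\leq\; c\,\|p_{u_{\varrho h}} - p_{u_{\varrho h},h}\|_{A_{1/\varrho}}.
\]

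Next I would estimate the Galerkin error for the diffusion problem. Since $p_{u_{\varrho h},h}$ is the $A_{1/\varrho}$-orthogonal projection of $p_{u_{\varrho h}}$ onto $V_h$, Cea's lemma together with the Scott--Zhang bound \eqref{error Scott Zhang H1} yields
\[
\|p_{u_{\varrho h}} - p_{u_{\varrho h},h}\|_{A_{1/\varrho}}^2 \;\leq\; \sum_\ell h_\ell^{-2}\,\|\nabla(p_{u_{\varrho h}}-\Pi_h p_{u_{\varrho h}})\|_{L^2(\tau_\ell)}^2 \;\leq\; c\sum_\ell h_\ell^{-2}\,\|\nabla p_{u_{\varrho h}}\|_{L^2(\omega_\ell)}^2,
\]
and by local quasi-uniformity plus the identity $\|p_{u_{\varrho h}}\|_{A_{1/\varrho}}^2 = \|u_{\varrho h}\|_{S_\varrho}^2$ (obtained by testing \eqref{Def pu} with $v = p_{u_{\varrho h}}$) the last sum is bounded by $c\,\|u_{\varrho h}\|_{S_\varrho}^2$.

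The step I expect to be the main obstacle is then bounding $\|u_{\varrho h}\|_{S_\varrho}^2$ by a quantity of the form $\sum_\ell h_\ell^2\|\nabla\overline{u}\|_{L^2(\omega_\ell)}^2$ matching the right-hand side of \eqref{error utilde}, rather than the cruder $\|\overline{u}\|_{L^2(\Omega)}^2$ obtained from plain stability of \eqref{FEM abstract}. Using \eqref{Norm H1 diffusion} I would first reduce to $\|u_{\varrho h}\|_{S_\varrho}^2 \leq \sum_\ell h_\ell^2\|\nabla u_{\varrho h}\|_{L^2(\tau_\ell)}^2$ and insert $\pm\Pi_h\overline{u}$. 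The contribution $\sum_\ell h_\ell^2\|\nabla(u_{\varrho h}-\Pi_h\overline{u})\|_{L^2(\tau_\ell)}^2$ collapses via the inverse inequality to $c\,\|u_{\varrho h}-\Pi_h\overline{u}\|_{L^2(\Omega)}^2$, which splits by the triangle inequality into $\|u_{\varrho h}-\overline{u}\|_{L^2(\Omega)}^2$ controlled by \eqref{error diffusion target} and $\|\overline{u}-\Pi_h\overline{u}\|_{L^2(\Omega)}^2$ controlled by \eqref{error Scott Zhang L2}, both of the desired form. The remaining piece $\sum_\ell h_\ell^2\|\nabla\Pi_h\overline{u}\|_{L^2(\tau_\ell)}^2$ is handled by the $H^1$-stability of the Scott--Zhang operator, itself a direct consequence of \eqref{error Scott Zhang H1}. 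Squaring the resulting bound on $\|e\|_{L^2(\Omega)}$ produces \eqref{error utilde} (interpreting its right-hand side as an upper bound on $\|e\|_{L^2(\Omega)}^2$, which is what dimensional consistency forces).
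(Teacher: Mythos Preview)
Your argument is correct and follows the same overall Strang-type strategy as the paper: subtract the two Galerkin formulations, test with $e=\widetilde{u}_{\varrho h}-u_{\varrho h}$, apply the weighted Cauchy--Schwarz inequality, absorb $\int_\Omega\varrho|\nabla e|^2\,dx$ via the local inverse inequality, and reduce to bounding $\|p_{u_{\varrho h}}-p_{u_{\varrho h},h}\|_{A_{1/\varrho}}$.

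The only organizational difference is in how that last quantity is handled. The paper inserts $p_{\overline{u}}$ and $p_{\overline{u}h}$ and splits into three pieces: two data-perturbation terms bounded by $\int_\Omega\varrho|\nabla(u_{\varrho h}-\overline{u})|^2\,dx$, and the Galerkin error $\|p_{\overline{u}}-p_{\overline{u}h}\|_{A_{1/\varrho}}\leq\|p_{\overline{u}}\|_{A_{1/\varrho}}$. You instead take the crude Cea bound $\|p_{u_{\varrho h}}-p_{u_{\varrho h},h}\|_{A_{1/\varrho}}\leq c\,\|p_{u_{\varrho h}}\|_{A_{1/\varrho}}=c\,\|u_{\varrho h}\|_{S_\varrho}$ in one step (your Scott--Zhang detour here is harmless but unnecessary: choosing $v_h=0$ in Cea gives the same bound directly), and then split $u_{\varrho h}$ via $\Pi_h\overline{u}$. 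Both routes converge on the same endgame---inverse inequality plus \eqref{error diffusion target} and \eqref{error Scott Zhang L2}---and your version is arguably a bit more direct since it avoids the three-way triangle inequality. Your closing remark about the missing square on the left of \eqref{error utilde} is also correct; the paper's proof and its subsequent use in Theorem~3 both treat it as a bound on $\|\widetilde{u}_{\varrho h}-u_{\varrho h}\|_{L^2(\Omega)}^2$.
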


\begin{proof}
  The difference of the variational formulations
  \eqref{FEM abstract} and \eqref{FEM diffusion} first gives the
  Galerkin orthogonality
  \[
    \langle S_\varrho u_{\varrho h} - \widetilde{S}_\varrho
    \widetilde{u}_{\varrho h} , v_h \rangle_\Omega +
    \langle u_{\varrho h} - \widetilde{u}_{\varrho h},v_h \rangle_{L^2(\Omega)} = 0
    \quad \mbox{for all} \; v_h \in V_h ,
  \]
  which can be written as
  \[
    \langle \widetilde{S}_\varrho (\widetilde{u}_{\varrho h} - u_{\varrho h}),
    v_h \rangle_\Omega + \langle \widetilde{u}_{\varrho h} - u_{\varrho h},
    v_h \rangle_{L^2(\Omega)} = \langle
    (S_\varrho - \widetilde{S}_\varrho) u_{\varrho h} , v_h \rangle_\Omega
    \quad \mbox{for all} \; v_h \in V_h .
  \]
  When chosing
  $v_h = \widetilde{u}_{\varrho h} - u_{\varrho h} \in V_h$, and using
  $\langle \widetilde{S}_\varrho u , u \rangle_\Omega \geq 0$ for all
  $u \in H^1_0(\Omega)$, this gives
  \begin{eqnarray*}
    \| \widetilde{u}_{\varrho h} - u_{\varrho h} \|^2_{L^2(\Omega)}
    & \leq & \langle (S_\varrho - \widetilde{S}_\varrho) u_{\varrho h} ,
             \widetilde{u}_{\varrho h} - u_{\varrho h} \rangle_\Omega \\[2mm]
    & & \hspace*{-2cm} 
        = \, \langle B^* (p_{u_{\varrho h}} - p_{u_{\varrho h}h}) ,
          \widetilde{u}_{\varrho h} - u_{\varrho h} \rangle_\Omega \\[2mm]
    & & \hspace*{-2cm} = \,
        \int_\Omega \nabla (p_{u_{\varrho h}} - p_{u_{\varrho h}h}) \cdot
          \nabla  (\widetilde{u}_{\varrho h} - u_{\varrho h} ) \, dx \\
    & & \hspace*{-2cm} \leq \, \left( \int_\Omega \frac{1}{\varrho} \,
        |\nabla (p_{u_{\varrho h}} - p_{u_{\varrho h}h})|^2
        \, dx \right)^{1/2} \left( \int_\Omega \varrho \,
             |\nabla (\widetilde{u}_{\varrho h} - u_{\varrho h} )|^2
             \, dx \right)^{1/2}.
  \end{eqnarray*}
  Using \eqref{pw constant regularization} and an inverse inequality
  locally, we further have
  \begin{eqnarray*}
    \int_\Omega \varrho \,
    |\nabla (\widetilde{u}_{\varrho h} - u_{\varrho h} )|^2 \, dx
    & = &
    \sum\limits_{\ell=1}^N h_\ell^2 \,
    \| \nabla (\widetilde{u}_{\varrho h} - u_{\varrho h} )
          \|_{L^2(\tau_\ell)}^2 \\
    & \leq & c \, \sum\limits_{\ell=1}^N
             \| \widetilde{u}_{\varrho h} - u_{\varrho h} \|_{L^2(\tau_\ell)}^2
             \, = \, c \,
             \| \widetilde{u}_{\varrho h} - u_{\varrho h} \|_{L^2(\Omega)}^2 ,
  \end{eqnarray*}
  and hence,
  \begin{eqnarray*}
    \| \widetilde{u}_{\varrho h} - u_{\varrho h} \|_{L^2(\Omega)}^2
    & \leq & c \, \int_\Omega \frac{1}{\varrho} \,
             |\nabla (p_{u_{\varrho h}} - p_{u_{\varrho h}h})|^2 \, dx \\
    & & \hspace*{-3cm} = \,
        c \, \langle A_{1/\varrho} (p_{u_{\varrho h}} - p_{u_{\varrho h}h}),
          p_{u_{\varrho h}} - p_{u_{\varrho h}h} \rangle_\Omega \, = \,
          c \, \| p_{u_{\varrho h}} - p_{u_{\varrho h}h} \|^2_{A_{1/\varrho}},
  \end{eqnarray*}
  i.e., 
  \begin{eqnarray*}
    \| \widetilde{u}_{\varrho h} - u_{\varrho h} \|_{L^2(\Omega)}
    & \leq & c \, \| p_{u_{\varrho h}} - p_{u_{\varrho h}h} \|_{A_{1/\varrho}} \\
    & & \hspace*{-3cm} \leq \, c \, \Big[
             \| p_{u_{\varrho h}} - p_{\overline{u}} \|_{A_{1/\varrho}} +
             \| p_{\overline{u}h} - p_{u_{\varrho h}h} \|_{A_{1/\varrho}} +
             \| p_{\overline{u}} - p_{\overline{u}h} \|_{A_{1/\varrho}} \Big] .
  \end{eqnarray*}
  Note that we have
  \[
    \int_\Omega \frac{1}{\varrho} \, \nabla (p_{u_{\varrho h}} - p_{\overline{u}})
    \cdot \nabla v \, dx =
    \int_\Omega \, \nabla (u_{\varrho h} - \overline{u})
    \cdot \nabla v \, dx \quad \mbox{for all} \; v \in H^1_0(\Omega)
  \]
  and
  \[
    \int_\Omega \frac{1}{\varrho} \, \nabla (p_{u_{\varrho h}h} - p_{\overline{u} h})
    \cdot \nabla v_h \, dx =
    \int_\Omega \, \nabla (u_{\varrho h} - \overline{u})
    \cdot \nabla v_h \, dx \quad \mbox{for all} \; v_h \in V_h .
  \]
  Hence we conclude
  \[
    \| p_{u_{\varrho h}} - p_{\overline{u}} \|_{A_{1/\varrho}}^2 \leq
    \int_\Omega \varrho \, |\nabla (u_{\varrho h} - \overline{u})|^2 \, dx,
  \]
  as well as
  \[
    \| p_{u_{\varrho h}h} - p_{\overline{u} h} \|_{A_{1/\varrho}}^2 \leq
    \int_\Omega \varrho \, |\nabla (u_{\varrho h} - \overline{u})|^2 \, dx .
  \]
  We can further obtain, inserting the Scott-Zhang interpolation
  $\Pi_h \overline{u}$, 
  \begin{eqnarray*}
    \int_\Omega \varrho \, |\nabla (u_{\varrho h} - \overline{u})|^2 \, dx
    & \leq & 2 \, \left[ \int_\Omega \varrho \,
             |\nabla (u_{\varrho h} - \Pi_h \overline{u})|^2 \, dx +
             \int_\Omega \varrho \, |\nabla (\overline{u} -
             \Pi_h \overline{u})|^2\, dx \right] \\
    & \leq & 2 \, \left[ \int_\Omega \varrho \,
             |\nabla (u_{\varrho h} - \Pi_h \overline{u})|^2 \, dx +
             c \, \sum_{\ell=1}^N h_\ell^2 \,
             \|\nabla \overline{u} \|_{L^2(\omega_\ell)}^2 \right].
  \end{eqnarray*}
  Using an inverse inequality locally, we further estimate the first term by 
  \begin{eqnarray*}
    \int_\Omega \varrho \, |\nabla (u_{\varrho h} - \Pi_h \overline{u})|^2 \, dx
    & = & \sum_{\ell=1}^N h_\ell^2 \, \|\nabla (u_{\varrho h} -
          \Pi_h \overline{u}) \|_{L^2(\tau_\ell)}^2 \\
    & & \hspace*{-3cm} \leq \, c \, \sum_{\ell=1}^N
             \|u_{\varrho h} - \Pi_h\overline u \|_{L^2(\tau_\ell)}^2 
        \, = \, \| u_{\varrho h} - \Pi_h \overline{u} \|_{L^2(\Omega)}^2 \\
    & & \hspace*{-3cm} \leq \, 2 \, \left[ \|u_{\varrho h}-\overline{u}
        \|_{L^2(\Omega)}^2 +
        \| \overline u-\Pi_h\overline{u}\|_{L^2(\Omega)}^2 \right] \, \leq \,
	c \, \sum_{\ell=1}^N h_\ell^2 \,
        \|\nabla \overline u\|_{L^2(\omega_\ell)}^2 .
  \end{eqnarray*}
  Recall that $p_{\overline{u}} \in H^1_0(\Omega)$ solves
  \[
    \int_\Omega \frac{1}{\varrho(x)} \, \nabla p_{\overline{u}} \cdot
    \nabla v(x) \, dx = \int_\Omega \nabla \overline{u}(x)
    \cdot \nabla v(x) \, dx \quad \mbox{for all} \; v \in H^1_0(\Omega),
  \]
  while $p_{\overline{u} h} \in V_h \subset H^1_0(\Omega)$ solves
    \[
    \int_\Omega \frac{1}{\varrho(x)} \, \nabla p_{\overline{u} h} \cdot
    \nabla v_h(x) \, dx = \int_\Omega \nabla \overline{u}(x)
    \cdot \nabla v_h(x) \, dx \quad \mbox{for all} \; v_h \in V_h.
  \]
  Hence we conclude the Galerkin orthogonality
  \[
    \int_\Omega \frac{1}{\varrho(x)} \, \nabla
    (p_{\overline{u}}(x) - p_{\overline{u} h}(x)) \cdot \nabla v_h(x) \, dx
    = 0 \quad \mbox{for all} \; v_h \in V_h,  
  \]
  and
  \[
    \| p_{\overline{u}} - p_{\overline{u} h} \|_{A_{1/\varrho}} \leq
    \| p_{\overline{u}} \|_{A_{1/\varrho}} .
  \]
  Now the assertion follows from
  \[
    \| p_{\overline{u}} \|^2_{A_{1/\varrho}} =
    \int_\Omega \frac{1}{\varrho(x)} \, |\nabla p_{\overline{u}}(x)|^2 \, dx
    \leq \int_\Omega \varrho(x) \, |\nabla \overline{u}(x)|^2 \, dx
    \, \leq \, \sum_{\ell=1}^N h_\ell^2 \,
    \|\nabla \overline{u}\|_{L^2(\omega_\ell)}^2.
  \]  
\end{proof}

\noindent
Now we are in the position to state the main results of this paper.

\begin{theorem}
  Let $\widetilde{u}_{\varrho h} \in V_h \subset H^1_0(\Omega)$ be the
  unique solution of the perturbed variational formulation
  \eqref{FEM diffusion} where the regularization function $\varrho(x)$
  is given as in \eqref{pw constant regularization}, and where the
  underlying finite element mesh is assumed to be locally
  quasi-uniform. Then there holds the error estimate
\begin{equation}\label{final error}
  \| \widetilde{u}_{\varrho h} - \overline{u} \|^2_{L^2(\Omega)} \leq
  c \, \sum\limits_{\ell=1}^N h_\ell^2 \, \| \nabla \overline{u}
  \|^2_{L^2(\tau_\ell)} =
  c \int_\Omega \varrho(x) \, |\nabla \overline{u}(x)|^2 \, dx \, .
\end{equation}
\end{theorem}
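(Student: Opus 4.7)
The plan is to combine the three ingredients we already have on the table: the perturbation estimate from the preceding lemma, the finite element/regularization error estimate \eqref{error diffusion target}, and the definition \eqref{pw constant regularization} of the piecewise constant regularization function. The only new ingredient needed beyond an application of the triangle inequality is a patch-to-element reduction of the right-hand side.

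First I would split the total error by the triangle inequality and Young's inequality,
\[
  \| \widetilde{u}_{\varrho h} - \overline{u} \|_{L^2(\Omega)}^2
  \leq 2 \, \| \widetilde{u}_{\varrho h} - u_{\varrho h} \|_{L^2(\Omega)}^2
  + 2 \, \| u_{\varrho h} - \overline{u} \|_{L^2(\Omega)}^2 .
\]
The second summand is immediately controlled by \eqref{error diffusion target}, while the first summand is controlled by the preceding lemma, i.e., by \eqref{error utilde}. Both bounds are of the same form, namely $c \sum_{\ell=1}^N h_\ell^2 \, \|\nabla \overline u\|_{L^2(\omega_\ell)}^2$, so at this stage I would already have
\[
  \| \widetilde{u}_{\varrho h} - \overline{u} \|_{L^2(\Omega)}^2
  \leq c \, \sum_{\ell=1}^N h_\ell^2 \,
  \| \nabla \overline{u} \|_{L^2(\omega_\ell)}^2 .
\]

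The actual work in the proof is the passage from the patch sum over $\omega_\ell$ to the element sum over $\tau_\ell$ claimed in \eqref{final error}. Here I would use the two structural assumptions on the mesh that are recalled in the theorem statement. Shape-regularity implies that each simplex $\tau_j$ belongs to only a uniformly bounded number of patches $\omega_\ell$, so interchanging the order of summation yields an overlap constant independent of $h$. Local quasi-uniformity guarantees that whenever $\tau_j \subset \omega_\ell$ the local mesh sizes satisfy $h_\ell \leq c \, h_j$ with a constant depending only on shape-regularity. Combining both facts gives
\[
  \sum_{\ell=1}^N h_\ell^2 \, \| \nabla \overline{u} \|_{L^2(\omega_\ell)}^2
  = \sum_{\ell=1}^N \sum_{\tau_j \subset \omega_\ell} h_\ell^2 \,
  \| \nabla \overline{u} \|_{L^2(\tau_j)}^2
  \leq c \, \sum_{j=1}^N h_j^2 \, \| \nabla \overline{u} \|_{L^2(\tau_j)}^2 .
\]

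To finish, I would recognize the resulting element sum as an integral against the piecewise constant regularization function, since $\varrho|_{\tau_\ell} = h_\ell^2$ by \eqref{pw constant regularization} yields
\[
  \sum_{\ell=1}^N h_\ell^2 \, \| \nabla \overline{u} \|_{L^2(\tau_\ell)}^2
  = \int_\Omega \varrho(x) \, |\nabla \overline{u}(x)|^2 \, dx,
\]
which is exactly the right-hand side in \eqref{final error}. The main obstacle is purely the bookkeeping in the patch-to-element reduction; the analytic substance of the result has already been absorbed into the preceding lemma and into the regularization/FE error estimate \eqref{error diffusion target}, so after that step the theorem follows by assembling inequalities.
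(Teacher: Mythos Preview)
Your proposal is correct and follows exactly the same approach as the paper: combine the triangle inequality with \eqref{error diffusion target} and \eqref{error utilde}, then invoke local quasi-uniformity for the patch-to-element reduction. In fact you spell out the overlap and mesh-size comparability argument that the paper only alludes to with the single clause ``since the finite element mesh is assumed to be locally quasi-uniform,'' so your write-up is more complete than the original.
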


\begin{proof}
The estimate \eqref{final error} follows from
the finite element error estimates
\eqref{error diffusion target} and \eqref{error utilde},
since the finite element mesh is assumed to be locally quasi-uniform.
\end{proof}

\begin{theorem}
  Similar as before we  also have the error estimate
  \[
    \| \widetilde{u}_{\varrho h} - \overline{u} \|_{L^2(\Omega)} \leq
    c \, \| \overline{u} \|_{H^s_\varrho(\Omega)},
  \]
  when assuming $\overline{u} \in [L^2(\Omega),H^1_0(\Omega)]_s$
  for some $s \in [0,1]$.
\end{theorem}

\noindent
The perturbed Galerkin finite element formulation
\eqref{FEM diffusion} can be written as coupled system to find
$(\widetilde{u}_{\varrho h},\widetilde{p}_{\varrho h}) \in V_h \times V_h$
such that
\begin{equation}\label{FEM primal}
  \int_\Omega \frac{1}{\varrho(x)} \, \nabla \widetilde{p}_{\varrho h}(x) \cdot
  \nabla v_h(x) \, dx + \int_\Omega \nabla \widetilde{u}_{\varrho h}(x)
  \cdot \nabla v_h(x) \, dx = 0
\end{equation}
for all $v_h \in V_h$, and
\begin{equation}\label{FEM adjoint}
  \int_\Omega \widetilde{u}_{\varrho h}(x) \, q_h(x) \, dx
  - \int_\Omega \nabla \widetilde{p}_{\varrho h}(x) \cdot \nabla q_h(x) \, dx
   =
  \int_\Omega \overline{u}(x) \, q_h(x) \, dx
\end{equation}
for all $q_h \in V_h$.
Note that this system corresponds to the finite element discretization of the
coupled variational formulation \eqref{VF primal} and \eqref{VF adjoint}.

The finite element variational formulation \eqref{FEM primal} and
\eqref{FEM adjoint} is equivalent to a coupled linear system of
algebraic equations,
\begin{equation}\label{coupled LGS}
  K_{\varrho h} \underline{p} + K_h \underline{u} = \underline{0}, \quad
  M_h \underline{u} - K_h \underline{p} = \underline{f},
\end{equation}
where we use the standard finite element stiffness and mass matrices
defined as
\begin{eqnarray*}
  K_h[j,k]
  & = & \int_\Omega \nabla \varphi_k(x) \cdot \nabla \varphi_j(x) \, dx, \\
  K_{\varrho h}[j,k]
  & = & \int_\Omega \frac{1}{\varrho(x)} \, \nabla \varphi_k(x)
  \cdot \nabla \varphi_j(x) \, dx, \\
  M_h[j,k]
  & = & \int_\Omega \varphi_k(x) \, \varphi_j(x) \, dx
\end{eqnarray*}
for $j,k=1,\ldots,M$, and the entries of the load vector
\[
  f_j = \int_\Omega \overline{u}(x) \, \varphi_j(x) \, dx \quad
  \mbox{for} \; j=1,\ldots,M.
\]
Since the finite element stiffness matrix $K_{\varrho h}$ is invertible,
we can eliminate the adjoint $\underline{p}$ to end up with the
Schur complement system
\begin{equation}\label{Schur LGS}
  \Big[ M_h + K_h K_{\varrho h}^{-1} K_h \Big] \underline{u} =
  \underline{f} .
\end{equation}
Since all involved stiffness and mass matrices are symmetric and positive
definite, unique solvability of the Schur complement system and therefore
of the Galerkin variational formulation \eqref{FEM primal} and
\eqref{FEM adjoint} follows.
%
%
%

%
%
%
%
%
\section{Numerical results}
\label{sec:Numerical results}
%
%
\subsection{Convergence studies}
\label{subsec:Convergence studies}
As a first numerical example we consider the two-dimensional
domain $\Omega = (0,1)^2$, and the discontinuous target function
\[
\overline{u}_{2D}(x) =
\left \{ \begin{array}{ccl}
	1 & & \mbox{for} \; x \in (0.25,0.75)^2, \\[1mm]
	0 & & \mbox{else.}
\end{array} \right.                      
\]
The initial mesh consists of $32$ triangular finite elements       
and $9$ degrees of freedom, see Figure~\ref{fig:meshes}.

\begin{figure}[ht]
	\centering
	\includegraphics[width=5cm]{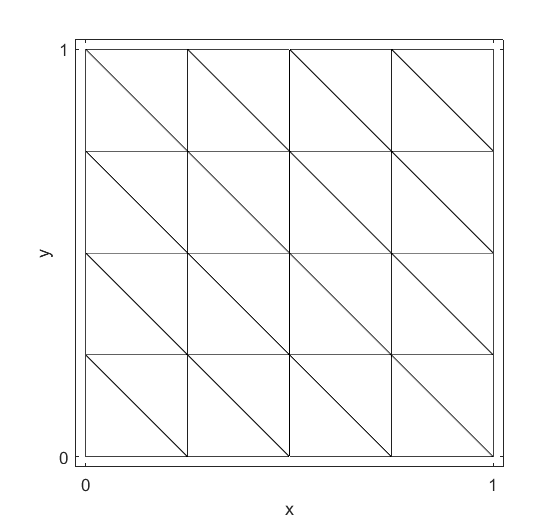}
	\includegraphics[width=5cm]{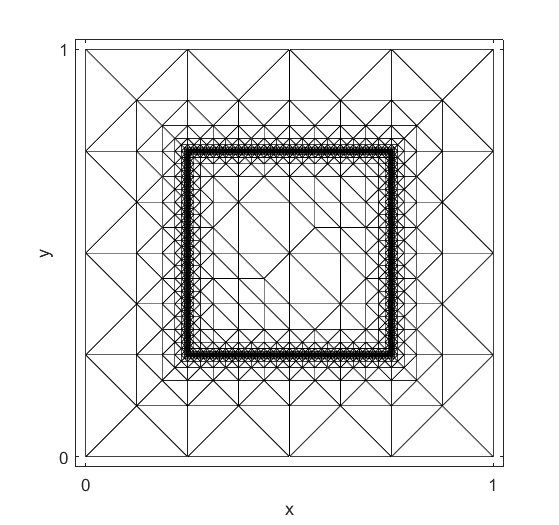}
	\caption{Initial mesh and adaptively refined mesh at level 14
          for $\Omega = (0,1)^2$ and $\overline{u}_{2D}$.}
	\label{fig:meshes}
\end{figure}

%
%

For a given mesh we compute the approximate solution
$\widetilde{u}_{\varrho h}$, the global error
\[
\eta := \| \widetilde{u}_{\varrho h} - \overline{u} \|_{L^2(\Omega)},
\]
and the local error indicators
\[
\eta_\ell := \| \widetilde{u}_{\varrho h} - \overline{u} \|_{L^2(\tau_\ell)},
\quad
\eta^2 = \sum\limits_{\ell=1}^N \eta_\ell^2 .
\]
We mark all elements $\tau_\ell$ when
\[
\eta_\ell > \theta \max\limits_{\ell=1,\ldots,N} \eta_\ell 
\]
is satisfied, with $\theta = 0.5$. After $14$ refinement steps we obtain
the mesh as shown in Fig.~\ref{fig:meshes} with $1.310.444$ finite
elements
and $655.215$ degrees of freedom. According to the final error estimate
\eqref{final error} we expect a linear order of convergence.
The numerical results are shown in Fig.~\ref{fig:discont-convergence_2D}
where in addition to the present approach we also present the
convergence results when considering energy regularization
\cite{UL:LangerSteinbachYang:2022a} with the optimal choice $\varrho=h^2$,
and the regularization in $L^2(\Omega)$ with $\varrho=h^4$, see
\cite{LLSY2:LangerLoescherSteinbachYang:2022a}.
As expected, we observe a linear order of convergence 
when using the variable energy regularization in the adaptive version
described above, while both the energy regularization
and the regularization in $L^2(\Omega)$ almost coincide with half the
order of convergence. For a comparison of the different approaches,
see also the computed
states as shown in Fig. \ref{fig:solutions}.

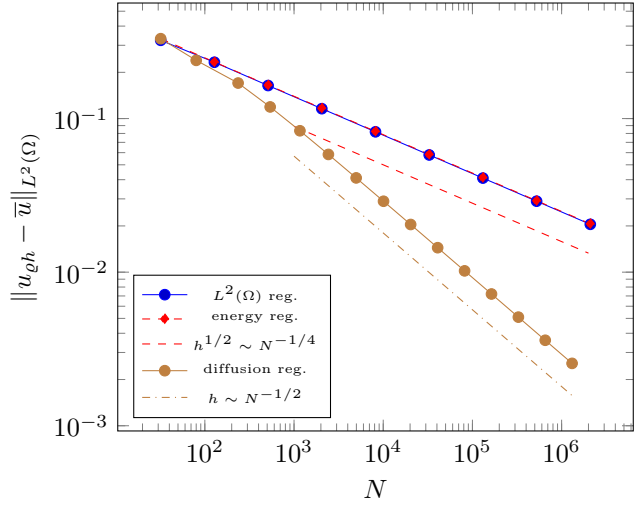
\begin{figure}
  \centering
  \begin{tikzpicture}
    \begin{axis}[
      xmode = log,
      ymode = log,
      xlabel=$N$,
      ylabel=$\| u_{\varrho h}- \overline{u} \|_{L^2(\Omega)}$,
      legend pos=south west, legend style={font=\tiny}]
      \addplot table [col sep=
      &, y=err, x=N]{tab_discontinuous_l2reg_h4.dat};
      \addlegendentry{$L^2(\Omega)$ reg.}
      \addplot[dashed, mark=diamond*, color=red]
      table [col sep=
      &, y=err, x=N]{tab_discontinuous_energ-reg_h2.dat};
      \addlegendentry{energy reg.}
      \addplot[
      domain = 1000:2000000,
      samples = 10,
      dashed, thin,red,] {0.5*x^(-0.25)};
      \addlegendentry{$h^{1/2}\sim N^{-1/4}$}
      \addplot[mark=*, color=brown] table [col sep=
      &, y=err, x=N]{tab_discontinuous_diffusion-reg_h2elem.dat};
      \addlegendentry{diffusion reg.}
      \addplot[
      domain = 1000:1300000,samples = 10,
      dash dot,thin,brown,] {1.8*x^(-0.5)};
      \addlegendentry{$h\sim N^{-1/2}$}
    \end{axis}
  \end{tikzpicture}

  \caption{Convergence plots for $\overline u_{2D}$ 
      choosing $\varrho =h^4$ for the $L^2(\Omega)$ regularization,
      $\varrho=h^2$ for the energy regularization, and
      $\varrho(x)=h_\ell^2$ for $x\in \tau_\ell$ for the
      diffusion regularization.}
  \label{fig:discont-convergence_2D}
\end{figure}

\begin{figure}[ht]
	\centering
	\begin{subfigure}[b]{0.3\textwidth}
		\includegraphics[width=\textwidth]{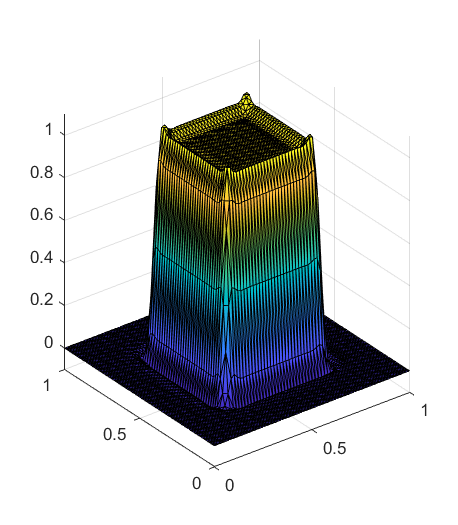}
		\caption{$L^2(\Omega)$ regularization}
	\end{subfigure}
	\hfill
	\begin{subfigure}[b]{0.3\textwidth}
		\includegraphics[width=\textwidth]{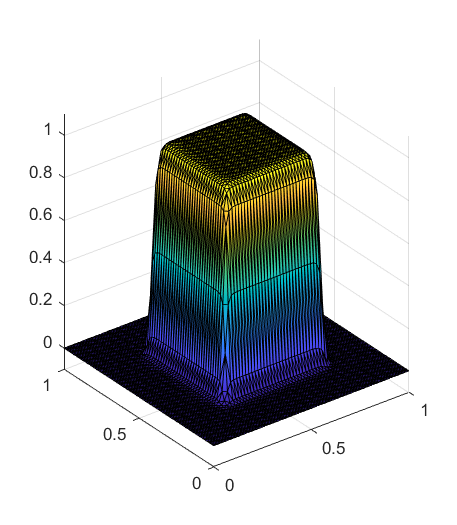}
		\caption{Energy regularization}
	\end{subfigure}
	\hfill
	\begin{subfigure}[b]{0.3\textwidth}
		\includegraphics[width=\textwidth]{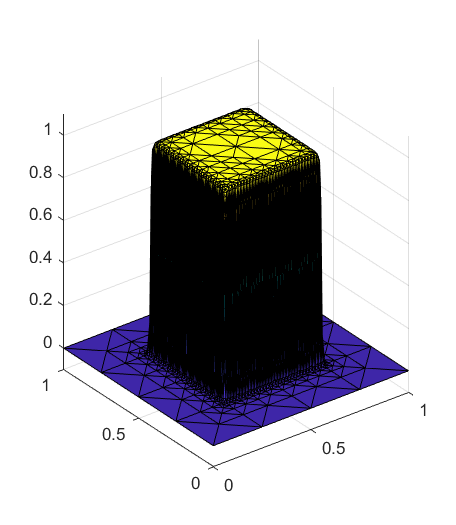}
		\caption{Diffusion regularization}
	\end{subfigure}
	\caption{Solution $\widetilde{u}_{\varrho h}$ for the three different
          regularization approaches, with 8192 finite elements (level 4)
          using a uniform refinement strategy for $L^2(\Omega)$ and energy
          regularization, and with 4972 finite elements (level 6) for an
          adaptive refinement for the diffusion regularization.}
	\label{fig:solutions}
\end{figure}

Next we consider the three-dimensional domain $\Omega=(0,1)^3$, and
the target function
\[
  \overline{u}_{3D}(x) =
  \begin{cases}
    1 & \mbox{for} \; x\in (0.25,0.75)^3, \\
    0 & \mbox{else}. 
  \end{cases}
\]
As shown in Fig.~\ref{fig:discont-convergence_3D} we still observe
a $h^{1/2}$ convergence for a uniform refinement 
in the case of
both the $L^2(\Omega)$
and energy regularizations, but a $h^{3/4}$ convergence for the adaptive
diffusion approach, where $h = N^{-1/3}$.
       
\begin{figure}
  \centering
  \begin{tikzpicture}
    \begin{axis}[
      xmode = log,
      ymode = log,
      xlabel=$N$,
      ylabel=$\| u_{\varrho h}- \overline{u} \|_{L^2(\Omega)}$,
      legend pos=south west,
      legend style={font=\tiny}]

      \addplot table [col sep=
      &, y=err, x=N]{tab_discontinuous_l2-reg3D_h4.dat};
      \addlegendentry{$L^2(\Omega)$ reg.}

      \addplot[mark=diamond*, color=red] table [col sep=
      &, y=err, x=N]{tab_discontinuous_energ-reg3D_h2.dat};
      \addlegendentry{energy reg.}


      \addplot[
      domain = 50000:1000000,
      samples = 10,
      dashed,thin,red,] {0.61*x^(-1/6)};
      \addlegendentry{$h^{1/2}\sim N^{-1/6}$}

      \addplot[mark=*, color=brown] table [col sep=
      &, y=err, x=N]{tab_discontinuous_diffusion-reg3D_h2elem.dat};
      \addlegendentry{diffusion reg.}


      \addplot[
      domain = 50000:1000000,
      samples = 10,
      dash dot,thin,brown,] {0.95*x^(-0.75/3)};
      \addlegendentry{$h^{0.75}\sim N^{-0.75/3}$}
    \end{axis}
  \end{tikzpicture}
  \caption{Convergence plots for $\overline u_{3D}$
      choosing $\varrho =h^4$ for the $L^2(\Omega)$ regularization,
      $\varrho=h^2$ for the energy regularization, and
      $\varrho(x)=h_\ell^2$ for $x\in \tau_\ell$ for the diffusion
      regularization.}  \label{fig:discont-convergence_3D}
\end{figure}

To explain the different convergence behaviour for the adaptive refinement
in two and three space dimensions, we
first consider the 2D case for
the example $\overline{u}_{2D}\in H^{1/2-\varepsilon}(\Omega)$, $\varepsilon > 0$
for a uniform refinement of the triangulation with $N$ triangles
(see Fig.~\ref{fig:meshes} (left)). Let further $m$ denote approximately
the number of elements in each row/column of the mesh grid, i.e.,
$N \sim m \cdot m = m^2$. Due to the regularity of $\overline{u}_{2D}$,
the optimal order of convergence is de facto $1/2$.
Thus, refining all of the $N$
elements uniformly, will lead to a an error reduction of order $h^{1/2}$,
as observed. Now we aim for an adaptive refinement. Since for the
particular test example the singularity of $\overline{u}_{2D}$ is only
along the boundary of the square $(0.25,0.75)^2$, it is sufficient
(after an initializing phase), to refine only
${\mathcal{O}}(m)={\mathcal{O}}(\sqrt{N})$ elements in the
neighborhood of this boundary in order to have the optimal rate of $1/2$.
Note, that for a uniform refinement, the number of elements would grow by a
factor of 4 in each step, for the adaptive scheme, we only refine
${\mathcal{O}}(m)= {\mathcal{O}}(\sqrt{N})$ elements in the neighborhood
of the discontinuity. Therefore, the number of elements only grows by a
factor of 2. Hence, if we adaptively refine ${\mathcal{O}}(N)$ elements,
we can expect an error reduction of order $h$, which is exactly what we
see in the numerical example as given in
Fig.~\ref{fig:discont-convergence_2D}. 

Now let us look at the 3D case.
Here again counting the elements along each edge,
denoted by $m$, we get the relation $N \sim m^3$. For a uniform refinement
we get a convergence rate $h^{1/2}$. In order to get the same rate with an
adaptive scheme, we need to refine at least the elements in the neighborhood
of the boundary of the cube $[0.25,0.75]^3$, where $\overline u_{3D}$ jumps.
Each side of the cube consists of approximately ${\mathcal{O}}(m^2)$
elements. So the whole boundary of the interior cube has approximately
${\mathcal{O}}(m^2)={\mathcal{O}}(N^{2/3})$ neighboring elements. So,
refining $O(N^{2/3})$ elements adaptively gives a rate of $1/2$. Hence,
if we refine ${\mathcal{O}}(N)$ elements adaptively, we might expect
an error reduction of order $h^{3/2 \cdot 1/2}=h^{3/4}$,
which is exactly what we observe in Fig.~\ref{fig:discont-convergence_3D}. 

Finally we consider the one-dimensional domain $\Omega = (0,1)$ and the target
\[
  \overline{u}_{1D}(x) =
  \begin{cases}
    1 & \mbox{for} \; x\in (0.25,0.75) , \\
    0 & \mbox{else}. 
  \end{cases}
\]
For $\overline{u}_{1D}\in H^{1/2-\varepsilon}(\Omega)$, $\varepsilon >0$,
and for uniformly refining $N$ elements, we will get an error reduction of
order $h^{1/2}$. Using an adaptive refinement though, it is enough to refine
exactly $4\sim {\mathcal{O}}(\log(N))$ elements in each step to get the
optimal order of $1/2$. Thus we can expect exponential convergence, which
is also what we observe in the numerical example in
Fig.~\ref{fig:discont-convergence_1D}. 

\begin{figure}
  \centering
  \begin{tikzpicture}
    \begin{axis}[
      xmode = log,
      ymode = log,
      xlabel=$N$,
      ylabel=$\| u_{\varrho h}- \overline{u} \|_{L^2(\Omega)}$,
      legend pos=north east,
      legend style={font=\tiny}]

      \addplot table [col sep=
      &, y=err, x=N]{tab_discontinuous_l2-reg1D_h4.dat};
      \addlegendentry{$L^2(\Omega)$ reg.}

      \addplot[mark=diamond*, color=red] table [col sep=
      &, y=err, x=N]{tab_discontinuous_energ-reg1D_h2.dat};
      \addlegendentry{energy reg.}


      \addplot[
      domain = 5000:1500000,
      samples = 10,
      dashed,thin,red,] {0.3*x^(-1/2)};
      \addlegendentry{$h^{1/2}\sim N^{-1/2}$}

      \addplot[mark=*, color=brown] table [col sep=
      &, y=err, x=N]{tab_discontinuous_diffusion-reg1D_h2elem.dat};
      \addlegendentry{diffusion reg.}


      \addplot[
      domain = 10:60,
      samples = 100,
      dash dot,thin,brown,] {7*4^(-sqrt(x))};
      \addlegendentry{$4^{-\sqrt{N}}$}
    \end{axis}
  \end{tikzpicture}
  \caption{Convergence plots for $\overline{u}_{1D}$ 
        choosing $\varrho =h^4$ for the $L^2(\Omega)$ regularization,
        $\varrho=h^2$ for the energy regularization, and
        $\varrho(x)=h_\ell^2$ for $x\in \tau_\ell$ for the
        diffusion regularization.} 
    \label{fig:discont-convergence_1D}
\end{figure}
%
%

\subsection{Control recovering}
\label{subsec:Control recovering} 
Once we have computed an approximation $\widetilde{u}_{\varrho h}$ of the
state $u_\varrho$, we can easily recover the corresponding control
via postprocessing. Using
$A = - \Delta : H^1_0(\Omega) \to H^{-1}(\Omega)$ we can write the state
equation \eqref{primal problem} as $A^{-1} z_\varrho = u_\varrho$, i.e.,
$z_\varrho \in H^{-1}(\Omega)$ solves the variational formulation
\[
  \langle A^{-1} z_\varrho , \psi \rangle_\Omega =
  \langle u_\varrho , \psi \rangle_\Omega \quad
  \mbox{for all} \; \psi \in H^{-1}(\Omega) .
\]
In addition to the finite element space $V_h \subset H^1_0(\Omega)$
of piecewise linear and continuous basis functions, we now define the
ansatz space $Z_H = S_H^0(\Omega) = \text{span}\{\psi_\ell\}_{\ell=1}^{N_H}$ of piecewise constant
basis functions which are defined with respect to some mesh of
mesh size $H \sim h$. Hence we may determine
$\widetilde{z}_{\varrho H} \in Z_H$ as the unique solution of the
Galerkin variational formulation
\[
  \langle A^{-1} z_{\varrho H} , \psi_H \rangle_\Omega =
  \langle \widetilde{u}_{\varrho h} , \psi_H \rangle_\Omega \quad
  \mbox{for all} \; \psi_H \in Z_H.
\]
While we can derive related error estimates using standard arguments,
in general we are not able to evaluate the inverse operator $A^{-1}$.
Hence we need to introduce a suitable approxiation as follows:
For any $z \in H^{-1}(\Omega)$ we define $p_z \in H^1_0(\Omega)$ as the
unique solution of the variational formulation
\[
  \langle A p_z , q \rangle_\Omega =
  \langle \nabla p , \nabla q \rangle_{L^2(\Omega)} =
  \langle z , q \rangle_\Omega \quad \mbox{for all} \; q \in H^1_0(\Omega) .
\]
In addition we determine an approximate solution $p_{zh} \in V_h$ such that
\[
  \langle \nabla p_{zh}, \nabla q_h \rangle_{L^2(\Omega)} =
  \langle z, q_h \rangle_\Omega \quad \mbox{for all} \; q_h \in V_h ,
\]
which defines an approximation
$\widetilde{A}^{-1} z := p_{zh}$ of $p_z = A^{-1}z$.
Hence we finally consider the variational formulation to find
$\widehat{z}_{\varrho H} \in Z_H$ such that
\[
  \langle \widetilde{A}^{-1} \widehat{z}_{\varrho H} , \psi_H \rangle_\Omega =
  \langle \widetilde{u}_{\varrho h} , \psi_H \rangle_\Omega
  \quad \mbox{for all} \; \psi_H \in Z_H .
\]
Unique solvability follows when $\widetilde{A}^{-1}$ is discrete elliptic
for all $\psi_H \in Z_H$  which can be ensured for an appropriate choice
of the finite element spaces $Z_H$ and $V_h$ where the latter has
to be defined on a sufficiently refined mesh than $Z_H$. From a practical
point of view, one additional refinement is sufficient.
Note that the above perturbed variational problem
can be written as
a mixed variational formulation to find
$(\widehat{z}_{\varrho H},p_{\widehat{z}_{\varrho H}h})
\in Z_H \times V_h$ such that
\[
  \langle p_{\widehat{z}_{\varrho H}h} , \psi_H \rangle_{L^2(\Omega)} =
  \langle \widetilde u_{\varrho h} , \psi_H \rangle_{L^2(\Omega)}, \quad
  \langle \nabla p_{\widehat{z}_{\varrho H}h} ,
  \nabla q_h \rangle_{L^2(\Omega)} =
  \langle \widehat{z}_{\varrho H} , q_h \rangle_{L^2(\Omega)}
\]
is satisfied for all $(\psi_H,q_h) \in Z_H \times V_h$.
Related error estimates rely on the use of the Strang lemma. A more
detailed numerical analysis of the approach was already given in
\cite{GanglLoescherSteinbach:2023}.
Using the fe-isomorphism $\mathbb{R}^M\ni \textbf{u}_{\varrho h}\leftrightarrow \widetilde u_{\varrho h}\in V_h$, we can reconstruct the control $\mathbb{R}^{N_H}\ni \textbf{z}_{\varrho H}\leftrightarrow \widehat z_{\varrho H}\in Z_H$ by solving 
\begin{align*}
	\begin{pmatrix}
		K_h & -\hat M_{h}^\top\\ \hat M_{h} & 0 
	\end{pmatrix}
	\begin{pmatrix}
		\textbf{p}_{h}\\ \textbf{z}_{\varrho H}
	\end{pmatrix} = 
	\begin{pmatrix}
		\textbf{0}_{h}\\ \hat M_h\textbf{u}_{\varrho h}
	\end{pmatrix}
\end{align*}
where the stiffness and mass matrices admit the entries
\begin{align}
	K_h[i,j] = \int_\Omega \nabla \varphi_j(x)\cdot \nabla \varphi_i(x)\, dx \quad \text{and}\quad \hat M_{h}[\ell,j] = \int_\Omega \varphi_j(x)\psi_\ell(x)\, dx 
\end{align}
for $i,j=1,\ldots,M$ and $\ell=1,\ldots,N_H$. To obtain stability, the coarse mesh of the control is chosen such that $h = H/4$. The reconstructed controls for the target $\overline u_{2D}$ for both a uniform refinement with $N_H=2048$ elements and an adaptive refinement with $N_H = 544$ elements are depicted in Figure \ref{fig:control-reconstruction}.  

\begin{figure}[htpb!]
	\centering
	\begin{subfigure}[b]{0.45\textwidth}
		\centering
		\includegraphics[width=\textwidth]{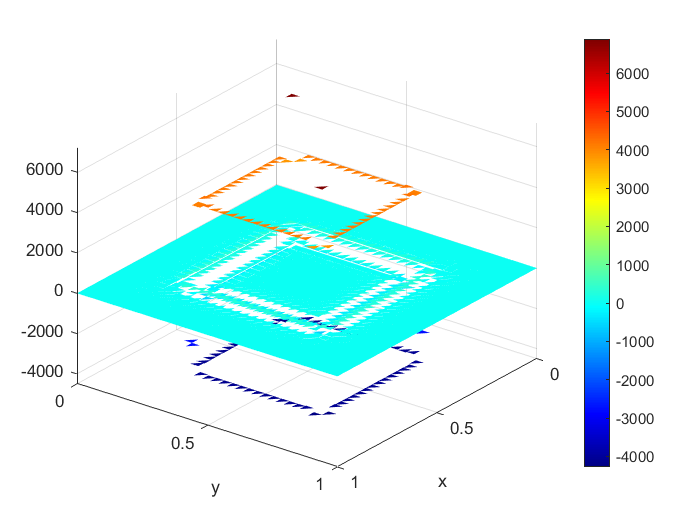}
		\caption{Uniform $N=2048$}
	\end{subfigure}
	\hfill
	\begin{subfigure}[b]{0.45\textwidth}
		\centering
		\includegraphics[width=\textwidth]{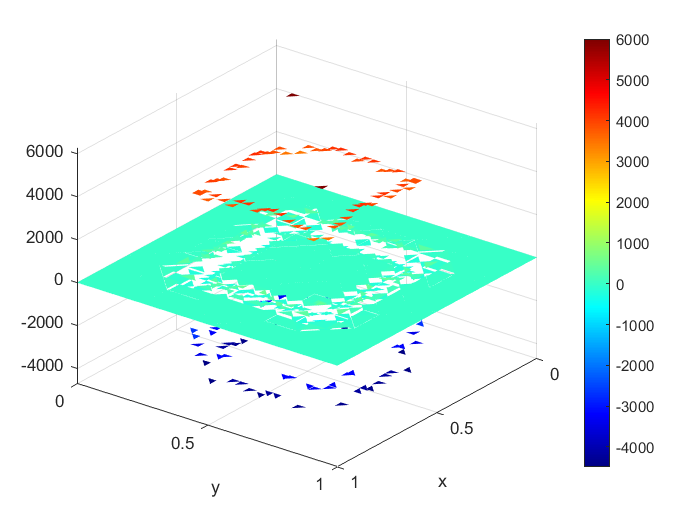}
		\caption{Adaptive $N=544$}
	\end{subfigure}
	\caption{Reconstructed controls $\hat z_{\varrho H}\in Z_H$ for the target $\overline u_{2D}$ for a uniform and an adaptive refinement. }
	\label{fig:control-reconstruction}
\end{figure} 

%
%
\subsection{Solver studies}
\label{subsec:Solver studies}
While all the numerical results 
presented in the previous subsection
were computed using Matlab with a sparse direct solver, we finally discuss
the use of preconditioned iterative solution strategies which are robust
with respect to the regularization parameter function $\varrho(x)$. Here we will
restrict our considerations to the three-dimensional case with the
target $\overline{u}_{3D}$.

We first consider the preconditioned conjugate gradient (PCG)
solver applied to the Schur complement equation
\eqref{Schur LGS} with a proper preconditioner. When performing a uniform
refinement, the diffusion coefficients (the inverse of the regularization
parameters) are constant on all elements, and we may replace $\varrho(x)$
by $h^2$ with $h$ being the global mesh size. Therefore, the Schur
complement is simplified as
$M_h+K_h K_{\varrho h}^{-1}K_h=M_h+\varrho K_h K_h^{-1}K_h=M_h+ h^2 K_h$.
Robust preconditioners for such a Schur complement have been studied in
our previous work \cite{LLSY2:LangerSteinbachYang:2022b}, and the spectral
equivalence of this Schur complement to the mass matrix was analyzed in
our recent work \cite{LLSY2:LangerLoescherSteinbachYang:2022a}. 
Since, in this case, the Schur complement
$M_h+\varrho K_h K_h^{-1}K_h=M_h+ h^2 K_h$
is spectrally equivalent to the mass matrix $M_h$,
we can use a simple diagonal approximation to the mass matrix such as 
$\textup{diag}[M_h]$ or $\textup{lump}[M_h]$ as cheap preconditioners 
for the Schur complement. In the case of an adaptive refinement, both
the local mesh refinement and the use of varying diffusion coefficients
$\varrho(x)$, which are piecewise constant, play a decisive role in
developing robust Schur complement preconditioners with respect to the mesh
size and diffusion coefficient jumps. Indeed, the following lemma states
that the lumped mass matrix $\textup{lump}[M_h]$ is spectrally equivalent
to the Schur complement $S_h = M_h+K_h K_{\varrho h}^{-1}K_h$.

\begin{lemma}
    Let us again assume that the computational domain $\Omega \subset \mathbb{R}^n$
    is decomposed into $N$ shape-regular finite elements $\tau_\ell$, $\ell=1,\ldots,N$,
    and the finite element space $V_h = \mbox{span} \{ \varphi_k \}_{k=1}^M \subset H^1_0(\Omega)$
    is spanned by continuous, piecewise linear basis functions. 
    Then the spectral equivalence inequalities
    \begin{equation}
    c_1 \, \textup{lump}[M_h] 
    \le M_h \le M_h+K_h K_{\varrho h}^{-1}K_h \le (1+c_\text{\tiny inv}^2)M_h \le
     c_2 \, \textup{lump}[M_h]
    \end{equation}
    hold, where $c_1 = 1/(n+2)$, $c_2 = 1+c_\text{\tiny inv}^2$, and $c_\text{\tiny inv}$ is nothing but the 
    constant from the local inverse inequalities 
    \begin{equation}
    \label{Eqn:LocalInverseInequalities}
     \|\nabla u_h\|_{L^2(\tau_\ell)} \le c_\text{\tiny inv}\,h_\ell^{-1}\,\|u_h\|_{L^2(\tau_\ell)}
     \quad \mbox{for all} \; u_h \in V_h,\; \ell=1,2,\ldots,N.
    \end{equation}
    %
    We note that $c_\text{\tiny inv}$ 
    is a 
    generic positive constant that can be computed from the shape-regularity parameters.
\end{lemma}
\begin{proof}
It remains to estimate $K_hK_{\varrho h}^{-1}K_h$ from above by the mass matrix $M_h$ in the spectral sense.
Using Cauchy's inequality, \eqref{pw constant regularization}, and the inverse inequalities \eqref{Eqn:LocalInverseInequalities}, 
we get the estimate
\begin{eqnarray*}
(K_hK_{\varrho h}^{-1}K_h \underline{u},\underline{u})
    &=& \sup_{\underline{q} \in \mathbb{R}^{M}} 
                            \frac{(K_h\underline{u},\underline{q})^2}{(K_{\varrho h}\underline{q},\underline{q})}\\
    &=& \sup_{q_h \in V_h}  \frac{[\int_\Omega \varrho^{1/2} \nabla u_h \cdot \varrho^{-1/2} \nabla q_h \, dx]^2}
             {\int_\Omega \varrho^{-1}  \nabla q_h \cdot  \nabla q_h \,dx}\\
    &\le& \sup_{q_h \in V_h}  \frac{\|\varrho^{1/2} \nabla u_h\|_{L^2(\Omega)}^2 \, \|\varrho^{-1/2} \nabla q_h\|_{L^2(\Omega)}^2}
    {\|\varrho^{-1/2} \nabla q_h\|_{L^2(\Omega)}^2}\\
    &=& \|\varrho^{1/2} \nabla u_h\|_{L^2(\Omega)}^2 = \int_\Omega \varrho\, \nabla u_h \cdot  \nabla u_h \, dx\\
    &=& \sum_{\ell=1}^N \int_{\tau_\ell} h_\ell^2 \, \nabla u_h \cdot  \nabla u_h \, dx
            = \sum_{\ell=1}^N h_\ell^2 \,\|\nabla u_h\|_{L^2(\tau_\ell)}^2\\
    &\le& \sum_{\ell=1}^N h_\ell^2 c_\text{\tiny inv}^2 h_\ell^{-2}\,\|u_h\|_{L^2(\tau_\ell)}^2
            = c_\text{\tiny inv}^2 (M_h \underline{u},\underline{u})
\end{eqnarray*}
for all nodal vectors $\underline{u} \in \mathbb{R}^{M}$, 
where $u_h \in V_h$ is the associated  finite element function.
The spectral equivalence inequalities 
\begin{equation}
\label{Eqn:SpectralEquivalenceMlM}
 (1/(n+2))\, \textup{lump}[M_h] \le M_h \le \textup{lump}[M_h]
\end{equation}
complete the proof. The spectral equivalence inequalities \eqref{Eqn:SpectralEquivalenceMlM} 
can easily be proven by the element matrix representation.
\end{proof}

\noindent
We note that $\textup{lump}[M_h]$ can be replaced by $\textup{diag}[M_h]$,
i.e., $\textup{diag}[M_h]$ is also spectrally equivalent to $S_h$.
The robustness with respect to the minimal and maximal
local mesh size ($h_{\min}$  and $h_{\max}$) is numerically confirmed by
the constant iteration numbers of the PCG method preconditioned by the
lumped mass matrix (Its (PCG)) on the adaptive refinements in
Table~\ref{tab:solveschurpcg}, in comparison with
the increasing conjugate gradient (CG) iteration numbers without using
the preconditioner (Its (CG)). Note that we solve the Schur complement
equation \eqref{Schur LGS}  until the relative preconditioned residual
error is reduced by a factor $10^{6}$. For the inverse operation of
$K_{\varrho h}^{-1}$ applied to a vector $\underline{v}$ within the PCG
iteration, we have used the classical Ruge--St\"{u}ben algebraic
multigrid (AMG) \cite{RugeStuben} preconditioned CG method to
solve $K_{\varrho h}\underline{w}=\underline{v}$ until the relative
preconditioned residual error reaches $10^{-12}$ in order to perform
a sufficiently accurate multiplication with the Schur complement.
Alternatively, one can here use a sparse factorization in a preprocessing step.
  
\begin{table}[h]
  \begin{tabular}{|l|rlll|ll|}
    \hline
    Level&\#Dofs&$h_{\min}$ & $h_{\max}$ & $\|\widetilde{u}_{\varrho
        h}-\overline{u}\|_{L^2(\Omega)}$ & Its (PCG) & Its (CG)\\ \hline
    $L_1$&$125$ &$2^{-2}$ &$2^{-2}$ &$3.01923$e$-1$& $7$ &$8$ \\
    $L_2$&$223$ &$2^{-3}$ &$2^{-2}$ &$2.55302$e$-1$& $18$ &$22$ \\
    $L_3$&$1,059$ &$2^{-4}$ &$2^{-2}$ &$1.79986$e$-1$& $23$ &$52$ \\
    $L_4$&$4,728$ &$2^{-5}$ &$2^{-2}$ &$1.26353$e$-1$& $26$ &$127$ \\
    $L_5$&$18,827$ &$2^{-6}$ &$2^{-2}$ &$8.84306$e$-2$& $25$ &$317$ \\
    $L_6$&$75,603$ &$2^{-7}$ &$2^{-2}$ &$6.21010$e$-2$& $24$ &$829$ \\
    $L_7$&$303,782$ &$2^{-8}$ &$2^{-2}$ &$4.37439$e$-2$& $23$ &$2,103$ \\
    $L_8$&$1,218,846$ &$2^{-9}$ &$2^{-2}$ &$3.08691$e$-2$& $23$ &$5,219$ \\
    $L_9$&$4,884,317$ &$2^{-10}$ &$2^{-2}$ &$2.18069$e$-2$& $19$ &$>10,000$ \\
    $L_{10}$&$19,553,202$ &$2^{-11}$ &$2^{-2}$ &$1.54097$e$-2$& $17$ &$>20,000$ \\
    $L_{11}$&$78,277,988$ &$2^{-12}$ &$2^{-2}$ &$1.08918$e$-2$& $15$ &$>40,000$ \\
    \hline
  \end{tabular}
  \caption{Comparison of the PCG (Its (PCG)) and CG iterations (Its (CG))
    for the Schur complement equation \eqref{Schur LGS} on the adaptive
    refinements.}  
  \label{tab:solveschurpcg}
\end{table}

\noindent
Furthermore, we provide some numerical results concerning robust solvers
for the coupled optimality system \eqref{coupled LGS}:
\begin{equation}\label{eq:unsym_system}
  \begin{bmatrix}
    K_{\varrho h} & K_h\\
    -K_h & M_h
  \end{bmatrix}
  \begin{bmatrix}
    \underline{p}\\
    \underline{u}
  \end{bmatrix}
  =
  \begin{bmatrix}
    \underline{0}\\
    \underline{f}
  \end{bmatrix}.
\end{equation}
Since the system matrix is non-symmetric and positive definite, we
apply the GMRES method with the following proposed block diagonal
preconditioner:
\begin{equation*}
  {\mathcal P}_h =
  \begin{bmatrix}
    \widehat{K}_{\varrho h} & 0\\
    0 & \textup{lump}[M_h]
  \end{bmatrix}.
\end{equation*}
The number of GMRES iterations (Its) using such a preconditioner are given
in Table~\ref{tab:solvecompgmres}. We solve the system until the relative
preconditioned residual error is reduced by a factor $10^{6}$. In the
preconditioner ${\mathcal P}_h$, we have utilized the Ruge--St\"{u}ben AMG preconditioner $\widehat{K}_{\varrho h}$ for $K_{\varrho h}$,
whereas the lumped mass matrix $\textup{lump}[M_h]$ has been used as
preconditioner for the Schur complement. We observe almost constant
iteration numbers of the GMRES method preconditioned by ${\mathcal P}_h$
on the uniform refinement as well as on the adaptive refinements as given
in Table \ref{tab:solvecompgmres}. We only see slightly higher iteration
numbers on the adaptive meshes than on the uniform ones.

\begin{table}[h]
  \begin{tabular}{|l|rlr|rlr|}
    \hline
    \multirow{2}{*}{Level}&\multicolumn{3}{c|}{Adaptive} &
    \multicolumn{3}{c|}{Uniform} \\  \cline{2-7}
      &\#Dofs&$\|\widetilde{u}_{\varrho h}-\overline{u}\|_{L^2(\Omega)}$ &Its& \#Dofs & $\|\widetilde{u}_{\varrho h}-\overline{u}\|_{L^2(\Omega)}$ &Its\\
      \hline
      $L_1$&$250$ &$3.01923$e$-1$&$14$ &$250$&$3.01923$e$-1$&$14$ \\
      $L_2$&$446$&$2.55302$e$-1$&$33$ &$1,458$&$2.26341$e$-1$&$28$ \\
      $L_3$&$2,118$&$1.79985$e$-1$&$40$ &$9,826$&$1.61850$e$-1$&$36$ \\
      $L_4$&$9,456$&$1.26356$e$-1$&$46$ &$71,874$&$1.14659$e$-1$&$36$ \\
      $L_5$&$37,656$&$8.84293$e$-2$&$48$ &$549,250$&$8.10582$e$-2$&$34$ \\
      $L_6$&$151,212$&$6.20997$e$-2$&$47$ &$4,293,378$&$5.72923$e$-2$&$32$ \\
      $L_7$&$607,586$&$4.37450$e$-2$&$46$ &$33,949,186$&$4.04944$e$-2$&$30$ \\
      $L_8$&$2,437,880$&$3.08701$e$-2$&$44$ &$270,011,394$&$2.86176$e$-2$& $28$ \\
      $L_9$&$9,769,496$&$2.18075$e$-2$&$44$ &&& \\
      $L_{10}$&$39,108,934$&$1.54100$e$-2$&$42$ &&&\\
      $L_{11}$&$156,568,020$&$1.08922$e$-2$&  $42$&&&\\ 
      \hline
    \end{tabular}
    \caption{Comparison of the preconditioned GMRES solver on both
      the adaptive and uniform refinements.} 
    \label{tab:solvecompgmres}
\end{table}

\noindent
On the other hand, we may reformulate the coupled system
\eqref{eq:unsym_system} in the following equivalent form 
\begin{equation}\label{eq:sym_system}
  \begin{bmatrix}
    K_{\varrho h} & K_h\\
    K_h & -M_h
  \end{bmatrix}
  \begin{bmatrix}
    \underline{p}\\
    \underline{u}
  \end{bmatrix}
  =
  \begin{bmatrix}
    \underline{0}\\
    -\underline{f}
  \end{bmatrix}.
\end{equation}
When applying the Bramble--Pasciak transformation
\begin{equation*}
  \mathcal{T}_h=
  \begin{bmatrix}
    K_{\varrho h} C_{h}^{-1}-I_h                 &  0 \\
    K_h C_{h}^{-1}     &  - I_h\\
  \end{bmatrix}
\end{equation*}
to the symmetric but indefinite system \eqref{eq:sym_system}, this leads to
the equivalent system
\begin{equation*}\label{eqn:spdBPsystem}
  \mathcal{K}_h 
  \begin{bmatrix}
    \underline{p}\\
    \underline{u}
  \end{bmatrix} 
  = 
  \begin{bmatrix}
    \underline{0}\\
    \underline{f}
  \end{bmatrix}
  \equiv
  \mathcal{T}_h
  \begin{bmatrix}
    \underline{0}\\
    -\underline{f}
  \end{bmatrix},
\end{equation*}
with the symmetric and positive definite system matrix
\begin{equation*}
  \begin{aligned}
    \mathcal{K}_h&=
    \begin{bmatrix}
      K_{\varrho h} C_{h}^{-1}-I_h                 &  0 \\
      K_h C_{h}^{-1}     &  - I_h\\
    \end{bmatrix}
    \begin{bmatrix}
      K_{\varrho h} & K_h\\
      K_h & -M_h
    \end{bmatrix}\\
    &=
    \begin{bmatrix}
      (K_{\varrho h}- C_{h})C_{h}^{-1}K_{\varrho h}                 &
      (K_{\varrho h}-C_h) C_h^{-1} K_h \\
      K_h C_h^{-1}(K_{\varrho h}-C_h)&  K_hC_h^{-1}K_h+M_h\\
    \end{bmatrix}.
  \end{aligned}
\end{equation*}
Here, $I_h$ denotes the identity, and $C_h$ is some symmetric and positive
definite (spd) matrix that is assumed to be spectrally equivalent to the
matrix $K_{\varrho h}$, and less than $K_{\varrho h}$, i.e., $C_h<K_{\varrho h}$. 
In particular, we can again take the classical spd
Ruge--St\"{u}ben AMG preconditioner 
$\widehat{K}_{\varrho h} = \delta K_{\varrho h}(I_h - N_{\varrho h}^j)^{-1}$
with a proper scaling $\delta > 0$ as $C_h$,
where 
$N_{\varrho h}$
denotes the AMG iteration matrix,
and $j$ the number of AMG cycles; see, e.g.,
\cite{LLSY2:TrottenbergOosterleeSchueller:2001Monograph}.
In our numerical experiments, we have chosen $\delta=0.5$, 
and we use the AMG V-cycle with
$2$ forward Gauss-Seidel pre-smoothing steps and $2$ backward Gauss-Seidel post-smoothing steps at all levels,
where $j$ is equal to $8$ and $12$ for uniform and adaptive refinements, respectively.
Now using $C_h = \widehat{K}_{\varrho h}$ and the lumped mass matrix
$\textup{lump}[M_h]$ as preconditioner for the Schur complement $M_h + K_h K_{\varrho h}^{-1} K_h$,
we arrive at the following (inexact) BP preconditioner: 
\begin{equation*}
    {\mathcal P}_h=
    \begin{bmatrix}
     K_{\varrho h} - C_h & 0\\
      0 & \textup{lump}[M_h]
    \end{bmatrix}.
\end{equation*}
Details on the BP-CG can be found in the original paper
\cite{LLSY2:BramblePasciak:1988a}; see also \cite{LLSY2:Zulehner:2001a}
for improved convergence rate estimates.
The number of BP-CG iterations (Its) using the  preconditioner
${\mathcal P}_h$ are provided in Table~\ref{tab:solvecomppbcg}. The solver
stops the iteration when the relative preconditioned residual error is
reduced by a factor $10^{6}$. From the constant PB-CG iteration numbers
on both the uniform and adaptive mesh refinements, we observe the
robustness of the proposed preconditioner for the coupled optimality system. 

\begin{table}[h]
    \begin{tabular}{|l|rlr|rlr|}
      \hline
      \multirow{2}{*}{Level}&\multicolumn{3}{c|}{Adaptive} &
      \multicolumn{3}{c|}{Uniform} \\  \cline{2-7}
      &\#Dofs&$\|\widetilde{u}_{\varrho h}-\overline{u}\|_{L^2(\Omega)}$ &Its& \#Dofs & $\|\widetilde{u}_{\varrho h}-\overline{u}\|_{L^2(\Omega)}$ &Its\\
      \hline
      $L_1$&$250$ &$3.01923$e$-1$&$13$ &$250$&$3.01923$e$-1$ &$13$ \\
      $L_2$&$446$&$2.55302$e$-1$&$32$ &$1,458$&$2.26348$e$-1$&$25$ \\
      $L_3$&$2,118$&$1.79986$e$-1$&$34$ &$9,826$&$1.61863$e$-1$&$30$ \\
      $L_4$&$9,456$&$1.26353$e$-1$&$38$ &$71,874$&$1.14657$e$-1$&$31$ \\
      $L_5$&$37,654$&$8.84307$e$-2$&$39$ &$549,250$&$8.10658$e$-2$&$25$ \\
      $L_6$&$151,206$&$6.21010$e$-2$&$38$ &$4,293,378$&$5.73048$e$-2$&$24$\\
      $L_7$&$607,558$&$4.37438$e$-2$&$36$ &$33,949,186$&$4.05125$e$-2$&$23$ \\
      $L_8$&$2,437,674$&$3.08690$e$-2$&$35$ &$270,011,394$&$2.86430$e$-2$& $23$\\
      $L_9$&$9,768,526$&$2.18067$e$-2$&$33$ &&& \\
      $L_{10}$&$39,105,552$&$1.54096$e$-2$&$34$&&&\\
      $L_{11}$&$156,550,890$&$1.089170$$-2$&$30$&&&\\ 
      \hline
    \end{tabular}
\caption{Comparison of the preconditioned PB-CG solver on both the adaptive and
  uniform refinements.} 
\label{tab:solvecomppbcg}
\end{table}
%
%

Now we can use the preconditioned PB-CG solver in 
a nested iteration process that interpolates the iterative approximation 
from the coarser mesh in order to obtain a good initial guess; see, e.g., \cite{LLSY2:Hackbusch:2016Monograph}. 
Table~\ref{tab:solvecomppbcgnested} shows 
that we can obtain approximate solutions $\widetilde{u}_{\varrho h}$, 
which differ from the desired state $\overline{u}$ in the order of the discretization error,
with considerable less iterations that are pretty constant across the levels;
cf.~also with Table~\ref{tab:solvecomppbcg}.

\begin{table}[h]
    \begin{tabular}{|l|rlr|rlr|}
      \hline
      \multirow{2}{*}{Level}&\multicolumn{3}{c|}{Adaptive} &
      \multicolumn{3}{c|}{Uniform} \\  \cline{2-7}
      &\#Dofs&$\|\widetilde{u}_{\varrho h}-\overline{u}\|_{L^2(\Omega)}$ &Its& \#Dofs & $\|\widetilde{u}_{\varrho h}-\overline{u}\|_{L^2(\Omega)}$ &Its\\
      \hline
      $L_1$&$250$ &$3.01923$e$-1$&$13$ &$250$&$3.01923$e$-1$ &$13$ \\
      $L_2$&$446$&$2.55278$e$-1$&$12$ &$1,458$&$2.26412$e$-1$&$10$ \\
      $L_3$&$2,118$&$1.80027$e$-1$&$13$ &$9,826$&$1.61959$e$-1$&$12$ \\
      $L_4$&$9,458$&$1.26535$e$-1$&$16$ &$71,874$&$1.14724$e$-1$&$12$ \\
      $L_5$&$37,696$&$8.84904$e$-2$&$16$ &$549,250$&$8.11476$e$-2$&$11$ \\
      $L_6$&$151,220$&$6.21325$e$-2$&$16$ &$4,293,378$&$5.73597$e$-2$&$11$\\
      $L_7$&$607,170$&$4.37623$e$-2$&$16$ &$33,949,186$&$4.05516$e$-2$&$11$\\
      $L_8$&$2,435,524$&$3.08822$e$-2$&$16$ &$270,011,394$&$2.86695$e$-2$& $11$\\
      $L_9$&$9,758,590$&$2.18163$e$-2$&$16$ &&& \\
      $L_{10}$&$39,065,548$&$1.54165$e$-2$&$16$&&&\\
      $L_{11}$&$156,371,118$&$1.089660$$-2$&$16$&&&\\ 
      \hline
    \end{tabular}
\caption{Comparison of the preconditioned PB-CG solver on both the adaptive and
  uniform refinements using nested iteration, where the stopping criterio for
  the nested iteration is: the relative preconditioned residual error of the
  full system (including both state and adjoint states) is smaller than
  $\alpha[n_l/n_{l-1}]^{-\frac{\beta}{3}}$, $l=2,3,...$, $\beta=0.5$
  (uniform), $\beta=0.75$ (adaptive), and $\alpha=0.025$. 
  } 
\label{tab:solvecomppbcgnested}
\end{table}

%
%
\section{Conclusion and outlook}
\label{sec:ConclusionAndOutlook}
We have studied finite element discretizations of the reduced optimality
system for the standard distributed space-tracking elliptic optimal control
problem, but using a new variable energy regularization technique. It has
been shown that the choice of the local mesh-size squared as local
regularization parameter $\varrho(x)$ leads to optimal rates of convergence
of the computed finite element state $\widetilde{u}_{\varrho h}$ to the
prescribed target $\overline{u}$ in the $L^2$ norm. In particular,
this approach allows us to adapt the local regularization parameter 
to the local mesh-size when using an adaptive mesh refinement,
where the adaptivity is driven by the localization of the $L^2$ norm of
the error between $\overline{u}$ and $\widetilde{u}_{\varrho h}$ 
as computable local error indicator. Numerical studies made for
discontinuous targets in one, two and three space dimensions illustrate
that these simple adaptive schemes show a significantly better
performance than the uniform refinement.
The control can easily be recovered from the computed state 
in a postprocessing procedure. We have also proposed 
iterative solvers for the finite element 
equations corresponding to the reduced optimality system.
The numerical studies have shown that these solvers are robust and efficient 
at the same time. This behavior is based on the fact 
that the mass matrix $M_h$, and, therefore, also the lumped
mass matrix $\text{lump}[M_h]$ are spectrally equivalent to the Schur
complement $K_h K_{\varrho h}^{-1} K_h + M_h$, and can be used as robust
preconditioners in the preconditioned BP-CG, MINRES, or GMRES solvers. 
Obviously, the classical plain Ruge--St\"{u}ben AMG
preconditioner is doing this job for $K_{\varrho h}$.
Here we may develop more efficient and robust preconditioners 
that are especially adapted to the
diffusion coefficient $\varrho(x)$ that changes from
element to element according to the mesh-sizes. In a possibly adaptive,
multilevel setting, the nested iteration technique can be used to 
compute state approximations 
which differ from the desired state in the order of the discretization error
in asymptotically optimal complexity. The computation of the control can 
be integrated in the nested iteration process that can then be stopped 
if the cost of the computed control exceeds some threshold or the
required approximation of the desired state is reached.
Moreover, the parallelization of such iterative solution
strategies should be  implemented in order to solve really large-scale systems
in three space dimensions.

Finally, the variable energy regularization and the robust solvers for
the corresponding linear system of algebraic equations can be extended
to optimal control problems with state or control constraints; see
\cite{GanglLoescherSteinbach:2023} 
for the case of a constant regularization parameter.
Furthermore, the results can be generalized to other state equations
like elasticity, Maxwell, and Stokes equations, but also to time-dependent 
problems such as parabolic and hyperbolic initial boundary value problems
\cite{UL:LangerSteinbachYang:2022a, LoescherSteinbach:2022}.

%
%

\section*{Declarations}
{\bf Conflict of interest:} The authors declared that they have
no conflict of interest. \\[1mm]
{\bf Data availability:}
Data will be made available on request.

\section*{Acknowledgments}
We would like to thank the computing resource support of the supercomputer
MACH--2\footnote{https://www3.risc.jku.at/projects/mach2/} from Johannes
Kepler Universit\"{a}t Linz and of the high performance computing cluster
Radon1\footnote{https://www.oeaw.ac.at/ricam/hpc} from Johann Radon Institute
for Computational and Applied Mathematics. 
Further, the financial support for the fourth author by the
Austrian Federal Ministry for Digital and Economic Affairs, the National
Foundation for Research, Technology and Development and the Christian
Doppler Research Association is gratefully acknowledged.
We finally thank B.~Kaltenbacher for pointing out the references
on the use of variable regularization techniques in imaging.


\bibliography{paperadaptivR1}
\bibliographystyle{abbrv} 
  

\end{document}